\theoremstyle{plain}
\newtheorem{tw}{Theorem}[section]
\newtheorem{lem}[tw]{Lemma}
\newtheorem{wn}[tw]{Corollary}
\newtheorem{fakt}[tw]{Fact}
\newtheorem{prop}[tw]{Proposition}
\newtheorem{question}[tw]{Question}
\theoremstyle{definition}
\newtheorem{df}[tw]{Definition}
\newtheorem{remark}[tw]{Remark}
\newcommand{\bumpineq}{\prec}
\newcommand{\N}{\mathbb{N}}
\newcommand{\I}{\mathbf{I}}
\newcommand{\RR}{\mathbb{R}}
\DeclareMathOperator{\cl}{cl}
\DeclareMathOperator{\bd}{bd}
\DeclareMathOperator{\supp}{supp}
\begin{document}
\title[On uniformly continuous maps between function spaces]{On uniformly continuous maps between function spaces}
\author[R.\ G\'orak]{Rafa\l ~G\'orak}
\address{Technical University of Warsaw\\ Pl. Politechniki 1\\ 00--661 Warszawa, Poland}
\email{R.Gorak@mini.pw.edu.pl}
\author[M.\ Krupski]{Miko\l aj Krupski}
\address{Department of Mathematics\\ University of Pittsburgh\\
Pittsburgh, PA 15260, USA\\
and \\
Institute of Mathematics\\ University of Warsaw\\ ul. Banacha 2\\
02--097 Warszawa, Poland }
\email{m.krupski@pitt.edu}
\author[W.\ Marciszewski]{Witold Marciszewski}
\address{Institute of Mathematics\\ University of Warsaw\\ ul. Banacha 2\\
02--097 Warszawa, Poland }
\email{wmarcisz@mimuw.edu.pl}

\subjclass[2010]{54C35}%
\keywords{Function spaces; Pointwise topology; Uniform
homeomorphisms;
$C_p(X)$ space}%

\begin{abstract}
In this paper we develop a technique of constructing uniformly continuous maps between function spaces $C_p(X)$ endowed with the pointwise topology.
We prove that if a space $X$ is compact
metrizable and strongly countable-dimensional, then there exists a uniformly continuous surjection from $C_p([0,1])$ onto $C_p(X)$.
We provide a partial result concerning the reverse implication. We also show that, for every infinite Polish zero-dimensional space $X$,
the spaces $C_p(X)$ and $C_p(X) \times C_p(X)$ are uniformly homeomorphic. This partially answers two questions posed by Krupski and Marciszewski.
\end{abstract}
\maketitle
\section{Introduction}

For a Tychonoff space $X$, by $C_p(X)$ we denote the space of all continuous real-valued functions on $X$ endowed with the pointwise topology.

This paper contributes to the study of uniformly continuous maps between function spaces. Let us recall that that a map $\varphi:C_p(X)\to C_p(Y)$
is \textit{uniformly continuous} if for each open neighborhood $U$ of the zero function in $C_p(Y)$, there is and open neighborhood $V$ of the zero function
in $C_p(X)$ such that $(f-g)\in V$ implies $(\varphi(f)-\varphi(g))\in U$. Building on a work of Gul'ko \cite{Gu1} and G\'orak \cite{Gor} we
further develop a technique of constructing uniformly continuous maps between function spaces. We shall apply this technique in two rather different contexts:
\begin{enumerate}[1.]
 \item To show that for any compact
metrizable and strongly countable-dimensional space $X$, there  is a uniformly continuous surjection from $C_p([0,1])$ onto $C_p(X)$
 \item To prove that for every infinite Polish zero-dimensional space $X$, the spaces $C_p(X)$ and $C_p(X) \times C_p(X)$ are uniformly homeomorphic.
\end{enumerate}
It seems in place to present here some motivations of our work.

After Pestov \cite{Pestov} proved that $\dim X=\dim Y$ provided $C_p(X)$ and $C_p(Y)$ are linearly homeomorphic a natural problem arose whether the dimension of a
space can be raised by continuous linear surjections of function spaces. Leiderman, Morris and Pestov \cite{LMP}
answered this question by
showing that for any compact metrizable finite-dimensional space $X$, the space $C_p(X)$ is a linear continuous image of $C_p([0,1])$ (this result was
strengthened by Levin who proved in \cite{Levin} that under the same assumptions $C_p(X)$ is even an open linear continuous image of $C_p([0,1])$).
Levin and Sternfeld noticed that there are also infinite-dimensional spaces $X$, for which $C_p(X)$ is a linear continuous image of $C_p([0,1])$
(see \cite[Remark 4.6]{LMP}) and the problem of characterizing spaces $X$ for which $C_p(X)$ is a linear continuous image of $C_p([0,1])$ remains open
(see \cite[Problem 4.4]{Levin}). Of course, the assumption that $C_p(X)$ is a linear continuous image of $C_p([0,1])$ imposes some restrictions on the space $X$.
Namely,
$X$ has to be compact metrizable and
strongly countable-dimensional
\footnote{A normal space $X$ is \textit{strongly countable-dimensional} if $X$ can be represented as a countable union of closed finite-dimensional subspaces.}
(see, e.g. \cite{GartFeng}).
As observed by Gartside and Feng \cite{GartFeng} strongly countable-dimensional metrizable compacta stratify by $fd$-height
(see Definition \ref{def_fd_pochodna} below): To every strongly countable-dimensional metrizable compactum $X$ one can assign a countable ordinal
(the $fd$-height of $X$) which is equal to 1 if $X$ is finite-dimensional. Gartside and Feng improved Leiderman-Morris-Pestov theorem by showing that
$C_p(X)$ is a linear continuous image of $C_p([0,1])$ if $X$ is compact metrizable and has finite $fd$-height (see \cite[Theorem 1]{GartFeng}). Their proof proceeds
by induction on $fd$-height. However, certain constants involved in the inductive construction, that control norms of linear maps, prevent it
going transfinitely.

In this paper we show that this problem disappears for uniformly continuous surjections, i.e. for any compact metrizable strongly countable-dimensional $X$,
the space
$C_p(X)$ is a uniformly continuous image of $C_p([0,1])$. We also provide a partial result concerning the reverse implication (see Theorem
\ref{presrvation_count_dim} below).

The problem whether, for ``sufficiently nice'' $X$, the space $C_p(X)$ is (linearly/uniformly) homeomorphic to its square $C_p(X)\times C_p(X)$ has a long history
and, usually, it is not easy to settle, cf. \cite[page 361]{Marciszewski-artykul-przegladowy}, \cite{KM}.
It was shown by Arhangel'skii \cite{A2}, Baars and de Groot \cite{BG} that for an infinite Polish zero-dimensional space $X$ which is either compact
or not $\sigma$-compact, the space $C_p(X)$ is linearly homeomorphic to $C_p(X)\times C_p(X)$. Thus, a natural question arises
(see \cite[Question 5.8]{KM}): \textit{Suppose that $X$ is an infinite
Polish zero-dimensional $\sigma$-compact space. Is it true that $C_p(X)$ is (linearly) homeomorphic to $C_p(X)\times C_p(X)$?}
We shall give the affirmative answer for uniform homeomorphisms, see Theorem \ref{tw_kwadrat} below (the question for linear
homeomorphisms remains open). As a corollary we get a partial (affirmative) answer to \cite[Question 5.7]{KM}, i.e. we show that
for an infinite countable metrizable space $X$, the spaces $C_p(X)$ and $C_p(X) \times C_p(X)$ are uniformly homeomorphic.

The paper is organized as follows. Section 2 introduces basic notation and contains some
auxiliary results whose proofs are either straightforward or are known - the interested reader should consult the papers
\cite{GartFeng} and \cite{Gor}. In Section 3 we show that if $X$ is a compact metrizable space and
$C_p(Y)$ is a uniformly continuous image of $C_p(X)$ then $Y$ is compact metrizable too. This result was known before and easily follows from a theorem of
Uspenskii \cite[Proposition 2]{U}. However, the proof which we give is new. 
In Section 4 we discuss the following problem: \textit{Suppose that $X$ is compact metrizable and strongly countable-dimensional. Is it true that $Y$
is strongly countable-dimensional provided $C_p(Y)$ is a uniformly continuous image of $C_p(X)$?} We prove that the answer is ``yes'' if a uniformly continuous
map satisfies an additional condition. Section 5 contains a proof of one of the main results of the paper which in particular says that for any compact metrizable
strongly countable-dimensional space $X$, the space $C_p(X)$ is a uniformly continuous image of $C_p([0,1])$.
The last, Section 6 is devoted to the study of uniform homeomorphisms between $C_p(X)$ and its square $C_p(X)\times C_p(X)$, for an infinite Polish zero-dimensional
space $X$.



\section{Preliminary facts and definitions}
We denote the unit interval $[0,1]$ by $\I$. We say that a space $Y$ is
{$u$-dominated} by $X$, if there exists a uniformly continuous surjection $\varphi:C_p(X) \to C_p(Y)$.

Since in some parts of our paper we deal with $C_p(X)$ spaces where $X$ is not compact, we use the notion of extended norm, cf. \cite{Be}.
\begin{df}
The extended norm on a linear spaces $E$ is a function $\|.\|:E \to [0,\infty]$ that satisfies properties of the conventional norm:
\begin{itemize}
\item[(i)] $\forall x \in E$ $\|x\|=0$ iff $x=0$;
\item[(ii)] $\forall x \in E\ \forall a \in \RR$ $\|ax\|=|a|\|x\|$;
\item[(iii)] $\forall x,y \in E$ $\|x+y\| \leq \|x\|+\|y\|$.
\end{itemize}
In our considerations we set $0 \cdot \infty=0$.
\end{df}
We will use the following slight modification of the relation introduced in \cite{Gor} which was motivated by the work of Gul'ko \cite{Gu1}:
\begin{df}
\label{def_rownosci} Let $E$ and $F$ be linear topological spaces
and  $\| \cdot \|_1$, $\| \cdot \|_2$ be extended norms, on $E$ and $F$,
respectively, not necessarily related to the topologies. We write
$(E,\| \cdot \|_1) \bumpeq (F,\| \cdot \|_2)$ if, for every
$\varepsilon
>0$, there exists a uniform homeomorphism $u_\varepsilon : E
\rightarrow F$ satisfying the following condition:
$$
 (a_\varepsilon) \;\;\; (1 + \varepsilon)^{-1}\|f\|_1
 \leq \|u_\varepsilon(f)\|_2 \leq \|f\|_1 \textrm{ for every }
 f \in E.
$$
If it is clear which extended norms are considered on $E$ and $F$ we write
$E \bumpeq F$.
\end{df}
The next two definitions were motivated by the idea of $c$-good maps introduced in \cite{GartFeng}.

\begin{df}\label{c-good} Let $(E,\| \cdot \|_1)$ and $(F,\| \cdot \|_2)$ be normed spaces and let $c$ be a positive number. We say that the map $\Phi: E\to F$ is $c$-good if
$$\forall f \in F\ \exists e \in E\ \ \Phi(e)=f \mbox{ and } \|e\|_1 \leq c\|f\|_2.$$
\end{df}

Observe that, by the Closed Graph Theorem, for compact spaces $X$ and $Y$, every linear continuous surjection $\varphi: C_p(X)\to C_p(Y)$ is $c$-good for some $c>0$.

\begin{df}
\label{def_nierownosci} Let $E$ and $F$ be linear topological spaces
and  $\| \cdot \|_1$, $\| \cdot \|_2$ be norms, on $E$ and $F$,
respectively, not necessarily related to the topologies. We write
$(F,\| \cdot \|_2) \bumpineq (E,\| \cdot \|_1)$ if, for every
$\varepsilon
>0$, there exists a uniform surjection $u_\varepsilon : E
\rightarrow F$ satisfying the following conditions:
\begin{itemize}
\item[(i)] $\forall e \in E$ $\|u_\varepsilon(e)\|_2 \leq \|e\|_1$,

\item[(ii)] $u_\varepsilon$ is $(1+\varepsilon)$-good.
\end{itemize}
If it is clear which norms are considered on $E$ and $F$ we write
$F \bumpineq E$.
\end{df}

Let us remark that in this paper we do not need to extend the above definition for extended norms.

\begin{df}
\label{def_produktu}
Let $\{E_i: i \in I\}$ be a collection of linear topological spaces
and let  $\| \cdot \|_{E_i}$ be extended norm on $E_i$,
 not necessarily related to the topology.
In the product $\Pi_{i \in I}E_i$ we will always consider the standard product topology and the extended norm \mbox{$\|\cdot\|:\Pi_{i \in I}E_i \to [0,\infty]$} given by:
$$\|(f_i)_{i \in I}\| =\sup_{i \in I} \|f_i\|_{E_i}\,.$$
\end{df}

\begin{df}
Let $(E_i)_{i \in \N}$, be a sequence of linear topological spaces with extended norms  $\| \cdot \|_{E_i}$, not necessarily related to the topologies.
By $\Pi^*_{i \in \N} E_i$ we denote the $c_0$-product of spaces $E_i$, i.e., a subspace of $\Pi_{i \in \N}E_i$ consisting of sequences $(f_i)_{i \in \N}$ such that $\lim_{i \to \infty}\|f_i\|_{E_i}=0$. The norm and the topology on $\Pi^*_{i \in \N} E_i$ is inherited from $\Pi_{i \in \N}E_i$ as declared in Definition \ref{def_produktu}.
\end{df}

The following simple facts will be used extensively throughout the whole paper:
\begin{fakt}
\label{nierownosci} Let $E$ and $F$ be linear topological spaces
and  $\| \cdot \|_1$, $\| \cdot \|_2$ be norms, on $E$ and $F$,
respectively, not necessarily related to the topologies.
If $E \bumpeq F$ then $E \bumpineq F$ and $F \bumpineq E$.
\end{fakt}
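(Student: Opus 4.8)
The plan is to verify the two relations $E \bumpineq F$ and $F \bumpineq E$ separately, directly from Definition \ref{def_nierownosci}, using the uniform homeomorphisms $u_\varepsilon : E \to F$ granted by $E \bumpeq F$ together with their inverses and suitable rescalings. Throughout I would use the elementary fact that on a linear topological space multiplication by a fixed nonzero scalar is a linear homeomorphism, hence a uniform homeomorphism, and that every uniform homeomorphism is in particular a uniform surjection.

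First I would handle $F \bumpineq E$, where no new map is needed: given $\varepsilon > 0$, I claim that $u_\varepsilon$ itself (viewed as a uniform surjection $E \to F$) witnesses the relation. Condition (i) of Definition \ref{def_nierownosci} is exactly the right-hand inequality $\|u_\varepsilon(e)\|_2 \le \|e\|_1$ in $(a_\varepsilon)$. For condition (ii), given $g \in F$ put $e := u_\varepsilon^{-1}(g)$; then $u_\varepsilon(e) = g$, and the left-hand inequality in $(a_\varepsilon)$ gives $\|e\|_1 \le (1+\varepsilon)\|u_\varepsilon(e)\|_2 = (1+\varepsilon)\|g\|_2$, so $u_\varepsilon$ is $(1+\varepsilon)$-good.

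For $E \bumpineq F$ the natural candidate is the inverse $u_\varepsilon^{-1} : F \to E$, but $(a_\varepsilon)$ only yields $\|g\|_2 \le \|u_\varepsilon^{-1}(g)\|_1 \le (1+\varepsilon)\|g\|_2$, i.e.\ $u_\varepsilon^{-1}$ \emph{overestimates} the norm, which is the wrong direction for condition (i). The remedy is to rescale: set $v_\varepsilon := (1+\varepsilon)^{-1} u_\varepsilon^{-1} : F \to E$, which is a uniform homeomorphism by the remark above. Then the estimate for $u_\varepsilon^{-1}$ gives $(1+\varepsilon)^{-1}\|g\|_2 \le \|v_\varepsilon(g)\|_1 \le \|g\|_2$, so condition (i) holds; and for condition (ii), given $e \in E$ the element $g := v_\varepsilon^{-1}(e) = (1+\varepsilon) u_\varepsilon(e)$ satisfies $v_\varepsilon(g) = e$ and $\|g\|_2 = (1+\varepsilon)\|u_\varepsilon(e)\|_2 \le (1+\varepsilon)\|e\|_1$, so $v_\varepsilon$ is $(1+\varepsilon)$-good.

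There is no genuine obstacle in this argument; the only subtlety worth flagging is the rescaling in the last paragraph, forced by the fact that the two-sided estimate $(a_\varepsilon)$ is not symmetric under passing to the inverse map. (The same rescaling in fact shows that $\bumpeq$ is a symmetric relation, so alternatively $E \bumpineq F$ could be deduced from $F \bumpeq E$ by the argument already used for $F \bumpineq E$.)
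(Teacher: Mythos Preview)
The paper does not prove this fact; it is simply listed among ``simple facts'' without argument, so there is nothing to compare your approach against. Your verification of $F \bumpineq E$ and the rescaling idea for $E \bumpineq F$ are both correct and exactly what is needed.

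There is, however, one small slip in the last paragraph: since $u_\varepsilon$ is not assumed linear, you cannot conclude that $v_\varepsilon^{-1}(e) = (1+\varepsilon)\,u_\varepsilon(e)$ from the definition $v_\varepsilon = (1+\varepsilon)^{-1} u_\varepsilon^{-1}$. Solving $v_\varepsilon(g)=e$ gives $u_\varepsilon^{-1}(g) = (1+\varepsilon)e$, hence $g = u_\varepsilon\bigl((1+\varepsilon)e\bigr)$. Fortunately the estimate survives: by the right-hand inequality of $(a_\varepsilon)$ one has
\[
\|g\|_2 = \bigl\|u_\varepsilon\bigl((1+\varepsilon)e\bigr)\bigr\|_2 \le \|(1+\varepsilon)e\|_1 = (1+\varepsilon)\|e\|_1,
\]
so $v_\varepsilon$ is indeed $(1+\varepsilon)$-good and the proof goes through unchanged.
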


\begin{fakt}\label{iloczyny}
Let $E_i$ and $F_i$, $i \in I$, be linear topological spaces
and  $\| \cdot \|_{E_i}$, $\| \cdot \|_{F_i}$ be extended norms, on $E_i$ and $F_i$,
respectively, not necessarily related to the topologies. Then
$$\forall i \in I \; E_i \bumpeq F_i \Rightarrow \Pi_{i \in I } E_i \bumpeq \Pi_{i \in I } F_i.$$
\end{fakt}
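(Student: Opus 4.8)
The plan is to build the witnessing maps for $\Pi_{i\in I}E_i \bumpeq \Pi_{i\in I}F_i$ coordinatewise. Fix $\varepsilon>0$. For each $i\in I$ the relation $E_i\bumpeq F_i$ supplies a uniform homeomorphism $u^i_\varepsilon\colon E_i\to F_i$ with
$$(1+\varepsilon)^{-1}\|f\|_{E_i}\le \|u^i_\varepsilon(f)\|_{F_i}\le\|f\|_{E_i}\qquad\text{for all }f\in E_i.$$
I would then define $u_\varepsilon\colon \Pi_{i\in I}E_i\to\Pi_{i\in I}F_i$ by $u_\varepsilon\bigl((f_i)_{i\in I}\bigr)=\bigl(u^i_\varepsilon(f_i)\bigr)_{i\in I}$. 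Since each $u^i_\varepsilon$ is a bijection, $u_\varepsilon$ is a bijection whose inverse is the coordinatewise map $\bigl((u^i_\varepsilon)^{-1}\bigr)_{i\in I}$.

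The first thing to check is that $u_\varepsilon$ is a uniform homeomorphism for the natural (additive) uniformities on the product spaces, which coincide with the product uniformities. A basic neighbourhood of $0$ in $\Pi_{i\in I}F_i$ has the form $W=\prod_{i\in J}U_i\times\prod_{i\notin J}F_i$ with $J\subseteq I$ finite and $U_i$ a neighbourhood of $0$ in $F_i$. For each $i\in J$ uniform continuity of $u^i_\varepsilon$ yields a neighbourhood $V_i$ of $0$ in $E_i$ such that $f-g\in V_i$ implies $u^i_\varepsilon(f)-u^i_\varepsilon(g)\in U_i$; putting $V=\prod_{i\in J}V_i\times\prod_{i\notin J}E_i$, one gets that $(f_i)-(g_i)\in V$ forces $u_\varepsilon\bigl((f_i)\bigr)-u_\varepsilon\bigl((g_i)\bigr)\in W$. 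The same argument applied to the maps $(u^i_\varepsilon)^{-1}$ shows that $u_\varepsilon^{-1}$ is uniformly continuous, so $u_\varepsilon$ is indeed a uniform homeomorphism.

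Finally I would verify condition $(a_\varepsilon)$. By Definition \ref{def_produktu}, $\|u_\varepsilon\bigl((f_i)\bigr)\|=\sup_{i\in I}\|u^i_\varepsilon(f_i)\|_{F_i}$, so taking suprema over $i\in I$ in the pointwise inequalities above — using monotonicity of $\sup$ and the fact that multiplication by the positive constant $(1+\varepsilon)^{-1}$ commutes with $\sup$ in $[0,\infty]$ under the convention $0\cdot\infty=0$ — gives $(1+\varepsilon)^{-1}\|(f_i)\|\le\|u_\varepsilon\bigl((f_i)\bigr)\|\le\|(f_i)\|$. Since $\varepsilon>0$ was arbitrary, this proves $\Pi_{i\in I}E_i\bumpeq\Pi_{i\in I}F_i$. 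The only step that is not pure bookkeeping is the verification that $u_\varepsilon$ and $u_\varepsilon^{-1}$ are uniformly continuous; there the essential point — and the only place the product structure is really used — is that a basic entourage of the product involves only finitely many coordinates, so finitely many instances of the uniform continuity of the maps $u^i_\varepsilon$ suffice.
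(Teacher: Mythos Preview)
Your argument is correct and is exactly the straightforward coordinatewise verification the paper has in mind; note that the paper does not actually write out a proof of this fact, declaring it (together with the neighbouring facts) ``easy to verify.'' Your filling in of the details---the product-uniformity argument for uniform continuity and the passage to suprema for condition $(a_\varepsilon)$---is precisely what is needed.
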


\begin{fakt}\label{c_0_iloczyny}
Let $E_i$ and $F_i$, $i \in \N$, be linear topological spaces
and  $\| \cdot \|_{E_i}$, $\| \cdot \|_{F_i}$ be extended norms, on $E_i$ and $F_i$,
respectively, not necessarily related to the topologies.
Then $$\forall i \in \N \; E_i \bumpeq F_i \Rightarrow \Pi^*_{i \in \N } E_i \bumpeq \Pi^*_{i \in \N } F_i.$$
Moreover, if $\| \cdot \|_{E_i}$ and $\| \cdot \|_{F_i}$, $i \in \N$, are usual norms then
$$\forall i \in \N \; E_i \bumpineq F_i \Rightarrow \Pi^*_{i \in \N } E_i \bumpineq \Pi^*_{i \in \N } F_i.$$
\end{fakt}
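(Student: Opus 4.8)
The plan is to build every required map coordinatewise from the maps witnessing $E_i\bumpeq F_i$ (resp.\ $E_i\bumpineq F_i$), and then to verify two things: that the resulting coordinatewise map does not leave the $c_0$-product, and that it is uniformly continuous for the product uniformity. The first point is where the norm inequalities get used; the second relies on the fact that a basic neighbourhood of $0$ in a product constrains only finitely many coordinates.

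\emph{First implication.} Fix $\varepsilon>0$ and, for each $i\in\N$, pick a uniform homeomorphism $u^i\colon E_i\to F_i$ satisfying $(a_\varepsilon)$, i.e.\ $(1+\varepsilon)^{-1}\|f\|_{E_i}\le\|u^i(f)\|_{F_i}\le\|f\|_{E_i}$ for all $f\in E_i$. Put $u\bigl((f_i)_{i\in\N}\bigr)=\bigl(u^i(f_i)\bigr)_{i\in\N}$. Since $\|u^i(f_i)\|_{F_i}\le\|f_i\|_{E_i}$, a null sequence of norms is sent to a null sequence of norms, so $u$ maps $\Pi^*_{i\in\N}E_i$ into $\Pi^*_{i\in\N}F_i$; writing $f=(u^i)^{-1}(g)$ in the left inequality of $(a_\varepsilon)$ gives $\|(u^i)^{-1}(g)\|_{E_i}\le(1+\varepsilon)\|g\|_{F_i}$, so the coordinatewise inverse of $u$ maps $\Pi^*_{i\in\N}F_i$ into $\Pi^*_{i\in\N}E_i$, and $u$ is a bijection between the two $c_0$-products. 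Taking the supremum over $i$ in $(a_\varepsilon)$ yields $(1+\varepsilon)^{-1}\|(f_i)_i\|\le\|u((f_i)_i)\|\le\|(f_i)_i\|$, which is $(a_\varepsilon)$ for $u$. For uniform continuity, a basic neighbourhood of $0$ in $\Pi^*_{i\in\N}F_i$ has the form $W=\bigl(\prod_i U_i\bigr)\cap\Pi^*_{i\in\N}F_i$ with $U_i$ a neighbourhood of $0$ in $F_i$ and $U_i=F_i$ for all $i$ outside a finite set $J$; choosing for $i\in J$ a neighbourhood $V_i$ of $0$ in $E_i$ with $f-g\in V_i\Rightarrow u^i(f)-u^i(g)\in U_i$ and $V_i=E_i$ otherwise, the set $V=\bigl(\prod_i V_i\bigr)\cap\Pi^*_{i\in\N}E_i$ witnesses uniform continuity of $u$. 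The same argument applied to the $(u^i)^{-1}$ gives uniform continuity of $u^{-1}$, so $u$ is a uniform homeomorphism and $\Pi^*_{i\in\N}E_i\bumpeq\Pi^*_{i\in\N}F_i$.

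\emph{Second implication.} Now the norms are ordinary norms. Fix $\varepsilon>0$ and for each $i$ pick a uniformly continuous $(1+\varepsilon)$-good surjection $u^i\colon F_i\to E_i$ with $\|u^i(f)\|_{E_i}\le\|f\|_{F_i}$, and let $u$ act coordinatewise. As before $u$ maps $\Pi^*_{i\in\N}F_i$ into $\Pi^*_{i\in\N}E_i$, satisfies condition (i) after passing to suprema, and is uniformly continuous by the same finite-support argument. For surjectivity and $(1+\varepsilon)$-goodness of $u$: given $(e_i)_i\in\Pi^*_{i\in\N}E_i$, use $(1+\varepsilon)$-goodness of each $u^i$ to choose $f_i\in F_i$ with $u^i(f_i)=e_i$ and $\|f_i\|_{F_i}\le(1+\varepsilon)\|e_i\|_{E_i}$; then $\|f_i\|_{F_i}\to0$, so $(f_i)_i\in\Pi^*_{i\in\N}F_i$, $u((f_i)_i)=(e_i)_i$, and $\|(f_i)_i\|\le(1+\varepsilon)\|(e_i)_i\|$. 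Hence $\Pi^*_{i\in\N}E_i\bumpineq\Pi^*_{i\in\N}F_i$.

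I do not expect a genuine obstacle. The only points deserving care are that the coordinatewise map (and, in the first part, its inverse) stays inside the $c_0$-product, which is exactly where the norm bounds of $(a_\varepsilon)$ and of condition (i) are needed, and that uniform continuity is checked against the \emph{product} topology rather than, say, the sup-norm topology, so that it suffices to control only finitely many coordinate maps $u^i$ at once.
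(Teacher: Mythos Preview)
Your argument is correct and is exactly the straightforward coordinatewise verification the paper has in mind; the authors do not give a proof of this fact, listing it among the ``simple facts'' in Section~2 whose proofs are left to the reader. The two delicate points you single out---that the norm inequalities in $(a_\varepsilon)$ (resp.\ condition~(i) together with $(1+\varepsilon)$-goodness) are precisely what keep the coordinatewise maps inside the $c_0$-products, and that uniform continuity for the product topology only involves finitely many coordinates---are indeed the only things to check.
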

  
On all subspaces of $C_p(X)$ ($X$ is not necessarily compact) we will always consider the supremum extended norm. If $A$ is a closed subspace of space $X$, then we denote the subspace $\{f\in C_p(X): f\upharpoonright A\equiv 0\}$ by $C_p(X,A)$.

While the above facts are easy to verify the following ones are more challenging.
\begin{prop}[{\cite[Proposition 2.13]{Gor}} for $X$ compact]
\label{wn_Dug} Let A, B be closed subsets of a metrizable space X such that
$B \subset A$. Then $$C_p(X,B) \bumpeq C_p(A,B) \times C_p(X, A).$$
\end{prop}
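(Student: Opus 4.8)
The plan is to construct, for a given $\varepsilon > 0$, a uniform homeomorphism $u_\varepsilon : C_p(X,B) \to C_p(A,B) \times C_p(X,A)$ satisfying the norm estimate $(a_\varepsilon)$. The natural first step is to use a suitable extension operator. Since $X$ is metrizable and $A \subset X$ is closed, by the Dugundji extension theorem there is a linear continuous operator $e : C_p(A) \to C_p(X)$ with $e(h)\upharpoonright A = h$ and $\|e(h)\|_{\sup} = \|h\|_{\sup}$ (the Dugundji extension has values in the convex hull of the range, hence is norm-preserving on the sup norm); moreover it restricts to a map $C_p(A,B) \to C_p(X,B)$, since a function vanishing on $B$ is extended to one vanishing on $B$ when $B \subset A$. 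The candidate map is then $f \mapsto \bigl(f\upharpoonright A,\ f - e(f\upharpoonright A)\bigr)$, which is linear, continuous in the pointwise topologies, with linear continuous inverse $(h,g) \mapsto e(h) + g$. This already gives a \emph{linear} homeomorphism $C_p(X,B) \cong C_p(A,B) \times C_p(X,A)$.

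The subtlety — and the reason the statement is phrased with $\bumpeq$ rather than a plain linear homeomorphism — is that this linear splitting map need not satisfy $(a_\varepsilon)$ for the sup norm: on the product side the norm is the supremum of the two coordinate norms, and while $\|f\upharpoonright A\| \le \|f\|$, the second coordinate $\|f - e(f\upharpoonright A)\|$ can be as large as $2\|f\|$, and the inverse $\|e(h)+g\| \le \|h\| + \|g\|$ can be as large as $2\max(\|h\|,\|g\|)$. So the second step is to repair the norm by the standard telescoping/averaging trick used in this circle of ideas (cf. \cite{Gor}, \cite{GartFeng}): replace the single extension operator by a convex combination adapted to $\varepsilon$, or iterate the splitting along a rapidly converging geometric scheme so that the distortion at each stage is controlled by a factor $(1+\varepsilon)^{-1/2^n}$ and the composition telescopes to the required two-sided bound $(1+\varepsilon)^{-1}\|f\| \le \|u_\varepsilon(f)\| \le \|f\|$. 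Concretely, one scales: given $\delta$ small depending on $\varepsilon$, one can modify the map to $f \mapsto \bigl(f\upharpoonright A,\ \delta(f - e(f\upharpoonright A))\bigr)$ with inverse $(h,g)\mapsto e(h) + \delta^{-1} g$ and re-examine the constants, possibly composing with a linear rescaling of the second factor; the point is that uniform homeomorphisms, unlike linear ones, allow us to absorb bounded multiplicative distortions.

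The main obstacle I expect is precisely the bookkeeping in this second step: getting a uniform homeomorphism whose distortion is within $(1+\varepsilon)$ on both sides simultaneously, rather than merely a linear homeomorphism with some fixed (possibly large) distortion. One has to be careful that the construction stays within the relevant subspaces $C_p(X,B)$, $C_p(A,B)$, $C_p(X,A)$ — i.e. that all maps preserve vanishing on $B$ or on $A$ as appropriate — and that uniform continuity (in the pointwise, not normed, topology) of both the map and its inverse is maintained; since the underlying maps here are linear and pointwise-continuous, uniform continuity is automatic, so the real content is the norm estimate. For $X$ compact this is exactly \cite[Proposition 2.13]{Gor}; the only new point is to check that the argument there uses nothing beyond the existence of a norm-preserving linear Dugundji-type extension operator $C_p(A) \to C_p(X)$ carrying $C_p(A,B)$ into $C_p(X,B)$, which holds for arbitrary metrizable $X$ and closed $B \subset A \subset X$. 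Hence the proof reduces to quoting \cite{Gor} with this observation, and I would present it that way.
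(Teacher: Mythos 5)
Your bottom line coincides with what the paper actually does: it offers no new argument for this proposition, but simply cites \cite[Proposition 2.13]{Gor} and observes that the only place compactness of $X$ enters there is through the Dugundji extension operator, which exists (and is sup-norm nonexpansive, carrying $C_p(A,B)$ into $C_p(X,B)$) for every metrizable $X$. One correction to your speculative middle step, though it is not load-bearing since you ultimately defer to \cite{Gor}: rescaling the second coordinate by $\delta$ is still a linear operation and cannot produce the estimate $(a_\varepsilon)$ --- for $f\in C_p(X,A)\subseteq C_p(X,B)$ with $\|f\|=1$ your map gives $\bigl(0,\delta(f-e(0))\bigr)=(0,\delta f)$ of norm $\delta$, so the lower bound fails badly; the actual repair in Gul'ko's and G\'orak's arguments is genuinely nonlinear (a truncation/shift of the extended function), which is precisely why the conclusion is a uniform rather than linear equivalence.
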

Although the proof of the above fact is presented in \cite{Gor}, for $X$ compact, the same proof works also for all metrizable spaces $X$ since the the Dugundji extension theorem holds for all such spaces.
Applying this proposition one can easily obtain:

\begin{wn}\label{wn_wn_Dug}
For every closed subset $A$ of a metrizable space $X$ we have $C_p(X) \bumpeq
C_p(X,A) \times C_p(A)$. Consequently, $C_p(X,A)\bumpineq C_p(X)$ and $C_p(A) \bumpineq C_p(X)$ for $X$ compact.
\end{wn}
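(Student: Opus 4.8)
The plan is to obtain both assertions from Proposition \ref{wn_Dug} by specializing $B=\emptyset$. Since $C_p(X,\emptyset)=C_p(X)$ and $C_p(A,\emptyset)=C_p(A)$, applying that proposition to the pair $\emptyset\subset A$ will give
$C_p(X)\bumpeq C_p(A)\times C_p(X,A)$. I would then observe that the coordinate-swap map $(f,g)\mapsto(g,f)$ is a uniform homeomorphism between $C_p(A)\times C_p(X,A)$ and $C_p(X,A)\times C_p(A)$ that preserves the product extended norm exactly (it is the supremum of the two coordinate norms, hence symmetric). Composing this swap with the maps $u_\varepsilon\colon C_p(X)\to C_p(A)\times C_p(X,A)$ witnessing the first relation produces uniform homeomorphisms witnessing $C_p(X)\bumpeq C_p(X,A)\times C_p(A)$, which is the first claim.

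For the ``Consequently'' part I would restrict to compact $X$ --- this is precisely the reason for that hypothesis --- so that the supremum norms in sight are finite-valued genuine norms and the relation $\bumpineq$ of Definition \ref{def_nierownosci} makes sense. Fix $\varepsilon>0$ and let $u_\varepsilon\colon C_p(X)\to C_p(X,A)\times C_p(A)$ be a uniform homeomorphism with $(1+\varepsilon)^{-1}\|f\|\le\|u_\varepsilon(f)\|\le\|f\|$ for all $f$, as provided by the first part. Put $v_\varepsilon=\pi\circ u_\varepsilon$, where $\pi$ is the projection onto the first coordinate $C_p(X,A)$. Then $v_\varepsilon$ is a uniformly continuous surjection, and $\|v_\varepsilon(f)\|\le\|u_\varepsilon(f)\|\le\|f\|$ gives condition (i); for condition (ii), given $h\in C_p(X,A)$ the element $f=u_\varepsilon^{-1}(h,0)$ satisfies $v_\varepsilon(f)=h$ and $\|f\|\le(1+\varepsilon)\|u_\varepsilon(f)\|=(1+\varepsilon)\|h\|$, so $v_\varepsilon$ is $(1+\varepsilon)$-good. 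Hence $C_p(X,A)\bumpineq C_p(X)$, and using the projection onto $C_p(A)$ instead yields $C_p(A)\bumpineq C_p(X)$ by the identical argument. Equivalently, the second part can be packaged abstractly as: $\bumpeq$ implies $\bumpineq$ (Fact \ref{nierownosci}), a coordinate projection witnesses $E\bumpineq E\times F$, and $\bumpineq$ is transitive.

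I do not expect any genuine obstacle here. The only points requiring a moment's care are that Proposition \ref{wn_Dug} must be read as allowing $B=\emptyset$ (the Dugundji-extension argument behind it applies verbatim in that case, the extension operator simply being applied to $f\upharpoonright A$), and that $\bumpineq$ is defined only for honest norms, which is exactly why the second sentence is stated for compact $X$ whereas the first is not. Everything else is formal bookkeeping with the relations $\bumpeq$ and $\bumpineq$ and with coordinate projections of a two-factor product.
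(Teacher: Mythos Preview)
Your proposal is correct and matches the paper's approach: the paper offers no proof beyond ``Applying this proposition one can easily obtain,'' and your derivation---specializing Proposition~\ref{wn_Dug} to $B=\emptyset$, swapping factors, then projecting---is exactly the intended unpacking. Your observation that compactness is imposed in the second sentence precisely so that the sup norms are finite and Definition~\ref{def_nierownosci} applies is also on point.
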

Repeating the argument from the proof of Corollary 2.15 in \cite{Gor} one can show:
\begin{wn}
\label{o_punkcie} $C_p(X,x_0) \bumpeq C_p(X)$ for every $x_0 \in
X$, where $X$ is nondiscrete and metrizable.
\end{wn}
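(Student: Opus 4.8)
The plan is to reduce the statement to the behaviour of $C_p$ on a single convergent sequence, where the needed uniform homeomorphisms can be written down explicitly. The organizing observation is that \emph{it suffices to produce a closed set $A\subseteq X$ with $x_0\in A$ for which $C_p(A)\bumpeq C_p(A,x_0)$}. Indeed, Proposition~\ref{wn_Dug} with $B=\{x_0\}$ gives $C_p(X,x_0)\bumpeq C_p(A,x_0)\times C_p(X,A)$, while Corollary~\ref{wn_wn_Dug} gives $C_p(X)\bumpeq C_p(X,A)\times C_p(A)$; hence, using Fact~\ref{iloczyny} together with the (straightforward) facts that $\bumpeq$ is an equivalence relation and that $E\times F\bumpeq F\times E$, we get $C_p(X,x_0)\bumpeq C_p(X,A)\times C_p(A,x_0)\bumpeq C_p(X)$.

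The key step is the following lemma about a convergent sequence $S$ of pairwise distinct points together with its limit $s_0$: one has $C_p(S)\bumpeq C_p(S)\times\RR$ and $C_p(S,s_0)\bumpeq C_p(S,s_0)\times\RR$, and consequently $C_p(S)\bumpeq C_p(S,s_0)$ (the latter by combining the second relation with Corollary~\ref{wn_wn_Dug} applied to $s_0\in S$, which yields $C_p(S)\bumpeq C_p(S,s_0)\times\RR$). To prove the two displayed relations, adjoin to $S$ one isolated point $\ast$, getting a space $S'$; then $S'$ is homeomorphic to $S$ by a homeomorphism $h$ with $h(s_0)=s_0$ (send $\ast$ to the first term of the sequence and shift the remaining terms by one). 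The map $f\mapsto f\circ h$ is a linear homeomorphism of $C_p(S')$ onto $C_p(S)$ --- hence a uniform homeomorphism --- it is a supremum-norm isometry, and it carries $C_p(S',s_0)$ onto $C_p(S,s_0)$. Since identifying a function $f$ on $S'$ with the pair $(f\upharpoonright S,\ f(\ast))$ gives isometric identifications $C_p(S')=C_p(S)\times\RR$ and $C_p(S',s_0)=C_p(S,s_0)\times\RR$, both relations follow, witnessed by the same map for every $\varepsilon$.

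It remains to choose $A$. Since $X$ is nondiscrete and metrizable, it contains a convergent sequence of pairwise distinct points. If $x_0$ is not isolated in $X$, let $A$ be the closure of such a sequence lying in $X\setminus\{x_0\}$ and converging to $x_0$; then $A$ is a convergent sequence with limit $x_0$, closed in $X$, and the lemma (with $s_0=x_0$) gives $C_p(A)\bumpeq C_p(A,x_0)$. If $x_0$ is isolated in $X$, then, $X$ being nondiscrete, some point $p\neq x_0$ is not isolated; take a sequence of pairwise distinct points converging to $p$, discard the at most one term equal to $x_0$, let $S$ be the resulting convergent sequence with its limit $p$, and set $A=S\cup\{x_0\}$. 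This $A$ is closed in $X$ and $x_0$ is isolated in it, so restriction to $S$ identifies $C_p(A,x_0)$ isometrically with $C_p(S)$ and $C_p(A)$ isometrically with $C_p(S)\times\RR$; the first relation of the lemma then gives $C_p(A)\bumpeq C_p(S)\times\RR\bumpeq C_p(S)\bumpeq C_p(A,x_0)$, as required.

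The only point needing care is the bookkeeping: at each step one must check that the natural linear bijections between the function spaces in play are simultaneously uniform homeomorphisms for the pointwise uniformity and (near-)isometries for the supremum norm, so that they genuinely witness the relation $\bumpeq$, and that the chains of relations above compose correctly (using transitivity and symmetry of $\bumpeq$). This is exactly what ``repeating the argument of \cite[Corollary~2.15]{Gor}'' amounts to, and nothing beyond it is needed.
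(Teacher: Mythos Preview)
Your proof is correct and follows essentially the approach the paper indicates (deferring to \cite[Corollary~2.15]{Gor}): one reduces via Proposition~\ref{wn_Dug} and Corollary~\ref{wn_wn_Dug} to finding a closed $A\ni x_0$ with $C_p(A)\bumpeq C_p(A,x_0)$, and then handles this by an explicit shift isometry on a convergent sequence. The case split according to whether $x_0$ is isolated, and the verification that the maps involved are simultaneously pointwise uniform homeomorphisms and sup-norm isometries, are exactly what the referenced argument requires.
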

We will also use another result from \cite{Gor}:
\begin{fakt}{\cite[Fact 2.16]{Gor}}\label{prod_Cp(I)}
\label{lem_prod*_I}
$C_p(\I) \bumpeq \Pi^*_{i \in \N }
C_p(\I)$.
\end{fakt}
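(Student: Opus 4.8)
The plan is to exhibit a $c_0$-product structure inside $C_p(\I)$ itself by peeling off the subinterval near the point $0$. Put $a_n=2^{-n}$ for $n\in\N$, let $I_n=[a_{n+1},a_n]$, and let $D=\{0\}\cup\{a_n:n\in\N\}$; then $D$ is closed in $\I$, the intervals $I_n$ cover $(0,1]$, and two distinct $I_n$ meet at most in a point of $D$. First I would apply Proposition~\ref{wn_Dug} with $X=\I$, $B=\{0\}$, $A=D$ to obtain
$$C_p(\I,\{0\})\bumpeq C_p(D,\{0\})\times C_p(\I,D).$$

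Next I would identify the two right-hand factors concretely. The assignment $f\mapsto\bigl(f(a_n)\bigr)_{n\in\N}$ is a linear, supremum-norm preserving homeomorphism of $C_p(D,\{0\})$ onto $\Pi^*_{n\in\N}\RR$ (a continuous function on $D$ vanishing at $0$ is precisely a sequence of reals tending to $0$), so $C_p(D,\{0\})\bumpeq\Pi^*_{n\in\N}\RR$. Likewise $f\mapsto\bigl(f\upharpoonright I_n\bigr)_{n\in\N}$ is a linear, supremum-norm preserving homeomorphism of $C_p(\I,D)$ onto $\Pi^*_{n\in\N}C_p\bigl(I_n,\{a_{n+1},a_n\}\bigr)$: the restrictions are unconstrained apart from vanishing at the shared endpoints, which lie in $D$, and $\sup_{I_n}|f|\to0$ exactly because $f$ is continuous with $f(0)=0$ and $I_n$ shrinks to $\{0\}$. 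Composing with the affine reparametrizations $I_n\cong\I$ (which preserve the supremum norm) and using Fact~\ref{iloczyny}, and writing $Y:=C_p(\I,\{0,1\})$, I get
$$C_p(\I,\{0\})\bumpeq\bigl(\Pi^*_n\RR\bigr)\times\bigl(\Pi^*_nY\bigr).$$

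The step that really makes the argument work is a harmless regrouping of indices: the map sending $\bigl((x_n)_n,(y_n)_n\bigr)$ to $\bigl((x_{2n},x_{2n+1},y_n)\bigr)_n$ is a linear, supremum-norm preserving homeomorphism of $\bigl(\Pi^*_n\RR\bigr)\times\bigl(\Pi^*_nY\bigr)$ onto $\Pi^*_n\bigl(\RR^2\times Y\bigr)$, since $\{2n:n\in\N\}\cup\{2n+1:n\in\N\}=\N$ and a pair of null sequences is the same as one null sequence of triples. Hence $C_p(\I,\{0\})\bumpeq\Pi^*_n(\RR^2\times Y)$. On the other hand, Corollary~\ref{wn_wn_Dug} with $A=\{0,1\}$ gives $C_p(\I)\bumpeq C_p(\I,\{0,1\})\times C_p(\{0,1\})=Y\times\RR^2$, so $\RR^2\times Y\bumpeq C_p(\I)$, and Fact~\ref{c_0_iloczyny} upgrades this coordinatewise to $\Pi^*_n(\RR^2\times Y)\bumpeq\Pi^*_nC_p(\I)$. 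Finally Corollary~\ref{o_punkcie} gives $C_p(\I,\{0\})\bumpeq C_p(\I)$. Chaining these equivalences yields $C_p(\I)\bumpeq\Pi^*_{n\in\N}C_p(\I)$.

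I expect the only genuine work to be checking that the two maps in the second paragraph are homeomorphisms for the pointwise (not the norm) topologies and honest surjective isometries for the supremum norm — this is exactly where the choice $a_n\to0$ is used, via the elementary ``$c_0$-smallness'' along the pieces $I_n$ of a function continuous on $\I$ that vanishes on $D$. Each such linear homeomorphism is automatically uniformly continuous for the additive uniformity and, being a surjective isometry, satisfies condition $(a_\varepsilon)$ with $\varepsilon=0$, so it does witness $\bumpeq$. Everything else is a formal computation with $\bumpeq$ built from Proposition~\ref{wn_Dug}, Corollaries~\ref{wn_wn_Dug} and~\ref{o_punkcie}, and Facts~\ref{iloczyny} and~\ref{c_0_iloczyny}.
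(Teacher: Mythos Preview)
Your argument is correct. The paper does not actually prove this fact: it is imported wholesale from \cite[Fact~2.16]{Gor}, so there is no proof here to compare against. Your route---splitting $\I$ at the dyadic points $2^{-n}$, applying Proposition~\ref{wn_Dug} with $B=\{0\}\subset A=D$, identifying $C_p(D,\{0\})$ with the null sequences and $C_p(\I,D)$ with a $c_0$-product of copies of $C_p(\I,\{0,1\})$, and then reassembling via Corollary~\ref{wn_wn_Dug}, Corollary~\ref{o_punkcie} and Fact~\ref{c_0_iloczyny}---is exactly the kind of decomposition-and-regroup argument that underlies the machinery of \cite{Gor}, and each step checks out (the two ``identification'' maps are indeed linear, supremum-norm preserving, and homeomorphisms for the pointwise topology, and your index-regrouping map is a linear surjective isometry, hence witnesses $\bumpeq$ with $\varepsilon=0$).

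One cosmetic remark: where you invoke Fact~\ref{iloczyny} after the affine reparametrizations $I_n\cong\I$, you really want Fact~\ref{c_0_iloczyny}, since you are passing to a $\Pi^*$-product rather than a full product; the conclusion is of course unchanged.
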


Another important fact concerning $\bumpineq$ was proved in \cite{GartFeng}.

\begin{fakt}{\cite[Theorem 6]{GartFeng}}\label{dominacja_kostki}
For every $n \in \mathbb{N}$, we have $C_p(\I^n) \bumpineq C_p(\I)$ and hence $C_p(K) \bumpineq C_p(\I)$ for every finite dimensional metrizable compactum $K$.
\end{fakt}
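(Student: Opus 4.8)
The plan is to establish the first assertion, $C_p(\I^n)\bumpineq C_p(\I)$ for all $n\in\N$, and to deduce the ``hence'' part from it by a short reduction. First, observe that $\bumpineq$ is transitive: if $G\bumpineq F$ and $F\bumpineq E$, then, given $\varepsilon>0$, choosing $\delta$ with $(1+\delta)^2\le 1+\varepsilon$ and composing a norm-nonincreasing $(1+\delta)$-good uniform surjection $E\to F$ with one of the form $F\to G$ produces a norm-nonincreasing $(1+\delta)^2$-good uniform surjection $E\to G$. Now let $K$ be a finite dimensional metrizable compactum; by the Menger--N\"obeling theorem $K$ embeds as a closed subspace of some cube $\I^m$, and then Corollary~\ref{wn_wn_Dug} gives $C_p(K)\bumpineq C_p(\I^m)$. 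Hence, once $C_p(\I^m)\bumpineq C_p(\I)$ is known, transitivity of $\bumpineq$ yields $C_p(K)\bumpineq C_p(\I)$. So it remains to treat $C_p(\I^n)\bumpineq C_p(\I)$, the case $n=1$ being trivial via Fact~\ref{nierownosci}.

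Fix $n$ and $\varepsilon>0$. The key input is a quantitative form of the Kolmogorov superposition theorem: \emph{there exist $N\in\N$ and continuous maps $\psi_1,\dots,\psi_N\colon\I^n\to\I$ such that every $f\in C(\I^n)$ admits a representation $f=\sum_{q=1}^N g_q\circ\psi_q$ with $g_q\in C(\I)$ satisfying $\max_{q}\|g_q\|_\infty\le\tfrac{1+\varepsilon}{N}\|f\|_\infty$.} Assuming this, define the superposition operator
\[
u\colon \big(C_p(\I)\big)^N\longrightarrow C_p(\I^n),\qquad u\big((h_q)_{q\le N}\big)=\tfrac1N\sum_{q=1}^N h_q\circ\psi_q .
\]
It is linear, and it is continuous---hence, being linear, uniformly continuous---because a basic neighbourhood of $0$ in $C_p(\I^n)$ determined by finitely many points $y_1,\dots,y_k$ and some $\eta>0$ is hit whenever each $h_q$ is $\eta$-small at the finitely many points $\psi_q(y_1),\dots,\psi_q(y_k)$. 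By the triangle inequality and the factor $\tfrac1N$ we get $\|u((h_q))\|_\infty\le\max_q\|h_q\|_\infty=\|(h_q)\|$, which is condition~(i) of Definition~\ref{def_nierownosci}. Moreover $u$ is $(1+\varepsilon)$-good: for $f\in C_p(\I^n)$ take $h_q:=Ng_q$ with $g_q$ as above; then $u((h_q))=f$ and $\|(h_q)\|=N\max_q\|g_q\|_\infty\le(1+\varepsilon)\|f\|_\infty$. Thus $C_p(\I^n)\bumpineq \big(C_p(\I)\big)^N$. Finally, splitting $\N$ into an $N$-element set and its complement and combining Facts~\ref{prod_Cp(I)}, \ref{iloczyny} and~\ref{c_0_iloczyny} yields $C_p(\I)\bumpeq \big(C_p(\I)\big)^N$; hence $\big(C_p(\I)\big)^N\bumpineq C_p(\I)$ by Fact~\ref{nierownosci}, and transitivity of $\bumpineq$ gives $C_p(\I^n)\bumpineq C_p(\I)$, which together with the reduction finishes the proof.

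The one non-formal step is the quantitative superposition statement with constant $\tfrac{1+\varepsilon}{N}$; this is where the difficulty concentrates. The classical theorem of Kolmogorov (and Lorentz) uses $N=2n+1$ inner functions but delivers only a fixed constant $C_0>\tfrac1{2n+1}$: its proof is an iteration in which one step recovers a fixed fraction $1-\theta<1$ of the target using outer functions of norm at most $\tfrac{1}{2n+1}\|f\|_\infty$, so that $C_0=\tfrac{1}{(2n+1)(1-\theta)}$ and this value is not adjustable with $2n+1$ functions. To reach the sharper constant one must push the single-step efficiency $\theta=\theta_N$ below $\tfrac{\varepsilon}{1+\varepsilon}$ by using many inner functions, for instance many suitably placed fine grids of $\I^n$: for such a system of $N$ grids an arbitrarily large proportion of the points of $\I^n$ lies, with a fixed margin, in the interior of a cell of almost all of the grids, which allows single-step outer functions of norm $\le\tfrac{C}{N}\|f\|_\infty$ to reproduce $f$ up to error $\theta_N\|f\|_\infty$ with $\theta_N\to 0$; summing the geometrically decaying increments then keeps the $\tfrac1N$-type bound, up to the factor $\tfrac1{1-\theta_N}\to1$. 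Carrying out this combinatorial and geometric estimate carefully is the heart of the matter; everything else is the $\bumpeq/\bumpineq$ calculus of Section~2.
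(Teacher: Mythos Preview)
The paper does not provide its own proof of this fact; it is simply imported from \cite[Theorem~6]{GartFeng}. So there is no in-paper argument to compare against, only the cited source.

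Your outline is the natural route and, as far as the $\bumpeq/\bumpineq$ bookkeeping goes, it is correct: transitivity of $\bumpineq$ is clear; the ``hence'' part via Menger--N\"obeling and Corollary~\ref{wn_wn_Dug} is exactly right; the map $u\colon (C_p(\I))^N\to C_p(\I^n)$ is linear, $C_p$-continuous, norm-nonincreasing with the sup-norm on the product, and $(1+\varepsilon)$-good \emph{provided} the quantitative superposition statement holds; and $C_p(\I)\bumpeq (C_p(\I))^N$ follows from Fact~\ref{prod_Cp(I)} together with the obvious splitting of the $c_0$-product. This is in spirit the same approach as in \cite{GartFeng}, whose Theorem~6 also rests on a controlled Kolmogorov-type representation.

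The one genuine gap is the step you yourself flag: the existence, for each $\varepsilon>0$, of $N$ and $\psi_1,\dots,\psi_N\colon\I^n\to\I$ such that every $f\in C(\I^n)$ admits a decomposition $f=\sum_{q=1}^N g_q\circ\psi_q$ with $\max_q\|g_q\|\le\frac{1+\varepsilon}{N}\|f\|$. The classical Kolmogorov--Lorentz theorem with $2n+1$ inner functions only yields a fixed constant $C_0>\frac{1}{2n+1}$, which gives a $c$-good map for some $c$ but not a $(1+\varepsilon)$-good one, and there is no soft way to bootstrap a single $c$-good norm-nonincreasing surjection to $(1+\varepsilon)$-good for all $\varepsilon$. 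Your final paragraph sketches the right mechanism (enlarge $N$, drive the single-step residual $\theta_N\to 0$), but as written it is a heuristic rather than a proof: one needs a concrete family of inner functions, a precise covering/overlap estimate, and control of the iterated sums. If you intend this as a self-contained proof, that analytic core must be supplied or cited (it is precisely the content behind \cite[Theorem~6]{GartFeng}); if you intend it as an outline pointing to the literature, then it matches what the paper does by simply citing the result.
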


In paper \cite{GartFeng} authors introduce the following notion of \emph{fd}-derivative and \emph{fd}-height:
\begin{df}\label{def_fd_pochodna}
Let $X$ be a topological space. For every ordinal $\alpha$ we define using transfinite induction the $\alpha$th $fd$-derivative $X^{[\alpha]}$:
\begin{itemize}
\item[(i)] $X^{[0]} = X$
\item[(ii)]$X^{[\alpha +1]} =X^{[\alpha]} \setminus I(X^{[\alpha]})$, where\\
$I(Y)=\bigcup \{U \subset Y:\; U \textrm{ is open in } Y \textrm{ and finite dimensional}\}$
\item[(iii)]$X^{[\alpha]} = \bigcap_{\beta < \alpha} X^{[\beta]}$ for
$\alpha \in \mathbf{Lim}$.
\end{itemize}
If $X^{[\alpha]} = \emptyset$, for some $\alpha$, then  we also define the $fd$-height of $X$ $$fdh(X)=\min\{\alpha:\; X^{[\alpha]}=\emptyset\}.$$
\end{df}

The following fact can be easily deduced from the Baire category theorem:
\begin{fakt}\label{pochodna_przel_wym}
Let $K$ be a metrizable compactum. Then $K$ is strongly countable dimensional if and only if $fdh(K)<\omega_1$.
\end{fakt}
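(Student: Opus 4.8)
The plan is to prove both implications by exploiting that every $fd$-derivative $K^{[\alpha]}$ is a closed subspace of $K$ — hence again a metrizable compactum — and that the transfinite decreasing sequence $\bigl(K^{[\alpha]}\bigr)_\alpha$ stabilizes at a countable stage. First I would check, by transfinite induction, that $K^{[\alpha]}$ is closed in $K$ for every $\alpha$: at successor steps $K^{[\alpha+1]}=K^{[\alpha]}\setminus I(K^{[\alpha]})$ is closed in $K^{[\alpha]}$ because $I(K^{[\alpha]})$ is open in $K^{[\alpha]}$ by definition, and at limit steps it is an intersection of closed sets. Since $K$ is second countable, a (non-strictly) decreasing chain of closed subsets indexed by $\omega_1$ can strictly drop only at countably many steps — each strict drop is witnessed by a member of a fixed countable base that meets the larger set but misses the smaller one — so there is $\alpha_0<\omega_1$ with $L:=K^{[\alpha_0]}=K^{[\beta]}$ for all $\beta\ge\alpha_0$; in particular $I(L)=\emptyset$, i.e.\ $L$ has no nonempty open finite-dimensional subset.

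For the implication ``$K$ strongly countable-dimensional $\Rightarrow fdh(K)<\omega_1$'', note that the closed subspace $L$ of $K$ is itself strongly countable-dimensional, say $L=\bigcup_{n\in\N}F_n$ with each $F_n$ closed in $L$ and $\dim F_n<\infty$. If $L\neq\emptyset$, then $L$ is a nonempty compact metric (hence Baire) space, so some $F_n$ has nonempty interior in $L$; this interior is a nonempty open finite-dimensional subset of $L$, contradicting $I(L)=\emptyset$. Hence $L=\emptyset$, which means $K^{[\alpha_0]}=\emptyset$ and therefore $fdh(K)\le\alpha_0<\omega_1$.

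For the converse, suppose $fdh(K)=\alpha<\omega_1$, so $K^{[\alpha]}=\emptyset$. A short transfinite bookkeeping (for each $x\in K$ take the least $\gamma$ with $x\notin K^{[\gamma]}$; it is a successor) gives $K=\bigcup_{\beta<\alpha}\bigl(K^{[\beta]}\setminus K^{[\beta+1]}\bigr)=\bigcup_{\beta<\alpha}I(K^{[\beta]})$, a \emph{countable} union since $\alpha$ is a countable ordinal. It then remains to see that each $I(K^{[\beta]})$ is a countable union of finite-dimensional subsets closed in $K$: by definition $I(K^{[\beta]})$ is a union of open finite-dimensional subsets of $K^{[\beta]}$, by the Lindel\"of property countably many of them suffice, each such open set is an $F_\sigma$ in the metrizable space $K^{[\beta]}$, and every subspace of a finite-dimensional separable metric space is finite-dimensional; since $K^{[\beta]}$ is closed in $K$, these countably many pieces are closed in $K$. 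Assembling everything, $K$ is a countable union of finite-dimensional closed subsets, i.e.\ strongly countable-dimensional.

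The only genuinely delicate points are the stabilization of $\bigl(K^{[\alpha]}\bigr)_\alpha$ at a countable stage — a standard second-countability argument, which is essentially the content being asserted — and, in the converse direction, extracting countably many closed finite-dimensional pieces from each $I(K^{[\beta]})$. The Baire category theorem enters exactly at the step forcing the stabilized set $L$ to be empty; the rest is routine.
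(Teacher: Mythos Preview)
Your proof is correct and follows exactly the approach the paper hints at: the paper does not spell out an argument but simply says the fact ``can be easily deduced from the Baire category theorem,'' and your use of Baire to force the stabilized derivative $L$ to be empty is precisely that deduction. The remaining ingredients you supply---stabilization of the derivatives by second countability, and the $F_\sigma$/Lindel\"of bookkeeping for the converse---are the routine complements the paper leaves implicit.
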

A useful fact concerning the $fd$-derivative of a subspace is the following:
\begin{fakt} \label{lem_o_pochodnej}
For every subset $A$ of the space $X$, we have $A^{[\alpha]} \subset X^{[\alpha]} \cap
A$.
\end{fakt}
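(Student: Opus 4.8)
The plan is to prove the inclusion $A^{[\alpha]} \subset X^{[\alpha]} \cap A$ for all ordinals $\alpha$ simultaneously, by transfinite induction on $\alpha$, following the three clauses of Definition \ref{def_fd_pochodna}.

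For $\alpha = 0$ there is nothing to prove, since $A^{[0]} = A = X \cap A = X^{[0]} \cap A$. For a limit ordinal $\alpha$, one simply intersects the inductive hypotheses: by definition $A^{[\alpha]} = \bigcap_{\beta<\alpha} A^{[\beta]}$, and $A^{[\beta]} \subset X^{[\beta]} \cap A$ for each $\beta<\alpha$, so
$A^{[\alpha]} \subset \bigcap_{\beta<\alpha}\bigl(X^{[\beta]} \cap A\bigr) = \bigl(\bigcap_{\beta<\alpha} X^{[\beta]}\bigr) \cap A = X^{[\alpha]} \cap A$.

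The only clause requiring an argument is the successor step. Assume $A^{[\alpha]} \subset X^{[\alpha]} \cap A$; I want $A^{[\alpha+1]} \subset X^{[\alpha+1]} \cap A$. Fix $x \in A^{[\alpha+1]} = A^{[\alpha]} \setminus I(A^{[\alpha]})$. Then $x \in A^{[\alpha]} \subset X^{[\alpha]} \cap A$, so in particular $x \in A$ and $x \in X^{[\alpha]}$; it remains to check that $x \notin I(X^{[\alpha]})$. Suppose, towards a contradiction, that $x \in U$ for some $U$ which is open in $X^{[\alpha]}$ and finite dimensional. Since $A^{[\alpha]} \subset X^{[\alpha]}$ by the inductive hypothesis, the set $U \cap A^{[\alpha]}$ is open in the subspace $A^{[\alpha]}$ and contains $x$; moreover $U \cap A^{[\alpha]}$ is a subspace of the finite dimensional space $U$, hence finite dimensional by monotonicity of dimension with respect to subspaces. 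Consequently $U \cap A^{[\alpha]} \subset I(A^{[\alpha]})$, and in particular $x \in I(A^{[\alpha]})$, which contradicts the choice of $x$. Therefore $x \in X^{[\alpha]} \setminus I(X^{[\alpha]}) = X^{[\alpha+1]}$, and together with $x \in A$ this yields $x \in X^{[\alpha+1]} \cap A$, completing the induction.

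The argument is entirely elementary, and I do not anticipate a real obstacle; the one point that uses the inductive hypothesis in an essential way is the successor step, where the inclusion $A^{[\alpha]} \subset X^{[\alpha]}$ is needed to guarantee that the trace on $A^{[\alpha]}$ of an open subset of $X^{[\alpha]}$ is again relatively open, so that witnesses of finite-dimensional local structure in $X^{[\alpha]}$ restrict to witnesses in $A^{[\alpha]}$.
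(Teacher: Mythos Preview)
Your transfinite induction is correct and is precisely the straightforward verification one would expect; the paper itself does not supply a proof of this fact, listing it among the auxiliary results whose proofs are ``either straightforward or are known''. The only point worth flagging is the appeal to monotonicity of dimension for arbitrary subspaces in the successor step: this is unproblematic in the metrizable setting in which the paper actually applies Fact~\ref{lem_o_pochodnej} (all three standard dimension functions agree and are hereditary there), but would require care for completely general topological spaces.
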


We will also use another known variation of such topological derivative:

\begin{df}\label{c_pochodna}
Let $X$ be a topological space. For every ordinal $\alpha$ we define using transfinite induction the $\alpha$th \emph{c}-derivative $X_c^{[\alpha]}$:
\begin{itemize}
\item[(i)] $X_c^{[0]} = X$
\item[(ii)]$X_c^{[\alpha +1]} =X_c^{[\alpha]} \setminus J(X_c^{[\alpha]})$, where\\
$J(Y)=\bigcup \{U \subset Y:\; U \textrm{ is an open subset of Y with compact closure}\}$
\item[(iii)]$X_c^{[\alpha]} = \bigcap_{\beta < \alpha} X_c^{[\beta]}$ for
$\alpha \in \mathbf{Lim}$.
\end{itemize}
As in the previous case, we also define $c$-height $ch(X)=\min\{\alpha:\; X_c^{[\alpha]}=\emptyset\}$ for $X$ such that $X_c^{[\alpha]} = \emptyset$ for some $\alpha$.
\end{df}

Observe that if a space $X$ is zero-dimensional, then we can replace the set $J(Y)$ in the above definition by
$$J'(Y)=\bigcup \{U \subset Y:\; U \textrm{ is an open and compact subset of Y}\}.$$

Again, using the Baire category theorem we can easily obtain

\begin{fakt}\label{pochodna_c}
If $X$ is a Polish space, then X is $\sigma$-compact if and only if $ch(X)<\omega_1$.
\end{fakt}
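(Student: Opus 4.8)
The plan is to prove the two implications separately, both by fairly standard arguments once a couple of stability properties of the $c$-derivative are in hand. First I would record, by transfinite induction, that every $X_c^{[\alpha]}$ is a closed subset of $X$: the set $J(Y)$ is open in $Y$ as a union of open sets, so $X_c^{[\alpha+1]}$ is closed in $X_c^{[\alpha]}$, and the limit stages are intersections of closed sets. Consequently, when $X$ is Polish each $X_c^{[\alpha]}$ is again Polish (in particular Baire and second countable), and I would also use the elementary fact that an open subset of a compact metrizable space is $\sigma$-compact, being an $F_\sigma$-set.

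For the implication ``$ch(X)<\omega_1 \Rightarrow X$ is $\sigma$-compact'', let $\gamma=ch(X)$. For each $\alpha<\gamma$ the layer $L_\alpha:=X_c^{[\alpha]}\setminus X_c^{[\alpha+1]}=J(X_c^{[\alpha]})$ is an open subset of the Polish, hence Lindel\"of, space $X_c^{[\alpha]}$, so it is covered by countably many open sets with compact closure, each of which is $\sigma$-compact by the remark above; thus every $L_\alpha$ is $\sigma$-compact. Since the least ordinal $\alpha$ with $x\notin X_c^{[\alpha]}$ is always a successor (limit stages being intersections), every point of $X$ lies in some $L_\alpha$, whence $X=\bigcup_{\alpha<\gamma}L_\alpha$ is a countable union of $\sigma$-compact sets and therefore $\sigma$-compact.

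For the converse I would argue by contradiction: suppose $X$ is $\sigma$-compact while $X_c^{[\alpha]}\neq\emptyset$ for all $\alpha<\omega_1$. A strictly decreasing transfinite sequence of closed subsets of a second countable space has countable length --- for each strict drop choose a basic open set meeting the larger term but missing the smaller one, obtaining an injection into a fixed countable base --- so there is $\alpha_0<\omega_1$ with $Y:=X_c^{[\alpha_0]}=X_c^{[\alpha_0+1]}\neq\emptyset$, i.e. $J(Y)=\emptyset$. But $Y$, being closed in $X$, is $\sigma$-compact and Polish, say $Y=\bigcup_n K_n$ with each $K_n$ compact; by the Baire category theorem some $K_n$ has nonempty interior $U$ in $Y$, and then $\overline{U}\subseteq K_n$ is compact, so $U$ witnesses $J(Y)\neq\emptyset$ --- a contradiction. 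I expect the only point requiring a little care to be this stabilization of the decreasing sequence of $c$-derivatives at a countable ordinal; the rest is a direct appeal to second countability, the Baire category theorem, and the $\sigma$-compactness of open subsets of compact metric spaces.
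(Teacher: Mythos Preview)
Your proof is correct. The paper does not actually supply a proof of this fact, merely recording that it ``can easily be obtained using the Baire category theorem''; your argument spells this out, with the Baire theorem doing exactly the work the paper alludes to in the direction $\sigma$-compact $\Rightarrow ch(X)<\omega_1$, and the other direction being the straightforward decomposition into countably many $\sigma$-compact layers.
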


For this derivative, we have a weaker counterpart of Fact \ref{lem_o_pochodnej}:

\begin{fakt} \label{lem_o_pochodnej_c}
For every closed subset $A$ of the space $X$, we have $A_c^{[\alpha]} \subset X_c^{[\alpha]} \cap
A$.
\end{fakt}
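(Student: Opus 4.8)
\textit{Proof plan.} The plan is to prove the inclusion $A_c^{[\alpha]}\subseteq X_c^{[\alpha]}$ by transfinite induction on $\alpha$; the extra intersection with $A$ appearing in the statement is then free, because the $c$-derivatives form a decreasing family and $A_c^{[0]}=A$, so $A_c^{[\alpha]}\subseteq A$ automatically. Before the main induction I would record a side claim, also proved by an easy transfinite induction: each set $A_c^{[\alpha]}$ is closed in $X$. Indeed, $A_c^{[0]}=A$ is closed by hypothesis; since $J(Y)$ is a union of open subsets of $Y$, the set $A_c^{[\beta+1]}=A_c^{[\beta]}\setminus J(A_c^{[\beta]})$ is closed in $A_c^{[\beta]}$, hence closed in $X$ whenever $A_c^{[\beta]}$ is; and at a limit ordinal $A_c^{[\alpha]}=\bigcap_{\beta<\alpha}A_c^{[\beta]}$ is an intersection of sets closed in $X$.

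In the main induction the base case is $A_c^{[0]}=A\subseteq X=X_c^{[0]}$, and the limit case is immediate: for $\alpha\in\mathbf{Lim}$ the inductive hypothesis gives $A_c^{[\alpha]}=\bigcap_{\beta<\alpha}A_c^{[\beta]}\subseteq\bigcap_{\beta<\alpha}X_c^{[\beta]}=X_c^{[\alpha]}$. The successor step is the only substantial point. Assuming $A_c^{[\alpha]}\subseteq X_c^{[\alpha]}$, I fix $x\in A_c^{[\alpha+1]}=A_c^{[\alpha]}\setminus J(A_c^{[\alpha]})$ and must show $x\notin J(X_c^{[\alpha]})$. Suppose not: there is a set $U$, open in $X_c^{[\alpha]}$, with $x\in U$ and $\cl_{X_c^{[\alpha]}}U$ compact. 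Put $V=U\cap A_c^{[\alpha]}$, an open neighbourhood of $x$ in $A_c^{[\alpha]}$. Using the subspace-closure identity, $\cl_{A_c^{[\alpha]}}V=\cl_{X_c^{[\alpha]}}V\cap A_c^{[\alpha]}\subseteq\cl_{X_c^{[\alpha]}}U\cap A_c^{[\alpha]}$. By the side claim $A_c^{[\alpha]}$ is closed in $X$, hence closed in $X_c^{[\alpha]}$ (note that $A_c^{[\alpha]}\subseteq X_c^{[\alpha]}$ by the inductive hypothesis), so $\cl_{X_c^{[\alpha]}}U\cap A_c^{[\alpha]}$ is a closed subset of the compact set $\cl_{X_c^{[\alpha]}}U$ and is therefore compact; consequently its closed subset $\cl_{A_c^{[\alpha]}}V$ is compact as well. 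But then $V$ witnesses $x\in J(A_c^{[\alpha]})$, contradicting the choice of $x$. Hence $x\in X_c^{[\alpha]}\setminus J(X_c^{[\alpha]})=X_c^{[\alpha+1]}$, which closes the induction.

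The main obstacle — and the only place where the hypothesis that $A$ is closed is actually used — is the compactness bookkeeping in the successor step: passing from ``$V$ has compact closure in $X_c^{[\alpha]}$'' to ``$V$ has compact closure in $A_c^{[\alpha]}$'' requires $\cl_{X_c^{[\alpha]}}U\cap A_c^{[\alpha]}$ to be compact, and this needs $A_c^{[\alpha]}$ to meet the compact set $\cl_{X_c^{[\alpha]}}U$ in a closed set, which is exactly where closedness of $A_c^{[\alpha]}$ in $X_c^{[\alpha]}$ (itself inherited from closedness of $A$ in $X$) enters. Everything else is routine transfinite bookkeeping.
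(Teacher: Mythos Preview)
Your argument is correct: the side claim that each $A_c^{[\alpha]}$ is closed in $X$ is exactly what makes the successor step go through, and your compactness bookkeeping is fine (in fact you could shorten it slightly by noting that $\cl_{A_c^{[\alpha]}}V$ is closed in $A_c^{[\alpha]}$, hence closed in $X_c^{[\alpha]}$ by the side claim, and contained in the compact set $\cl_{X_c^{[\alpha]}}U$, so compact). The paper does not supply a proof of this fact at all --- it is listed among the preliminary results described as ``either straightforward or known'' --- so there is nothing to compare against; your transfinite induction is the standard and expected route.
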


\section{Preservation of compactness by $u$-domination. Gulko's support approach.}
It was proved by Uspenskii in \cite{U} that if $X$ is pseudocompact and $C_p(X)$ and $C_p(Y)$ are uniformly homeomorphic,
then $Y$ is pseudocompact too.
However, it follows from \cite[Proposition 2]{U} that, in fact,
the same assertion is true if instead of uniform homeomorphism we had a uniformly continuous surjection from
$C_p(X)$ onto $C_p(Y)$ (see Remark \ref{remark} below).

In this section we will present a different proof of Uspenskii's theorem (see Corollary \ref{pseudocompact} below). Our reasoning is an adaptation of a
method used by Baars \cite[Lemma 2.1]{B} and relies on the idea of a support introduced by Gul'ko \cite{G}.
This approach shows a certain similarity between properties of the Gul'ko support and of the
usual notion of support considered in the case of linear continuous maps between
function spaces (see remarks following Theorem \ref{compactness}).

Let $\varphi:C_p(X)\to C_p(Y)$ be a uniformly continuous surjection.
It was proved by Gul'ko \cite{G} (cf. \cite{MP}) that
to each point $y\in Y$, we can assign a (nonempty) finite set
$K(y)\subseteq X$ (called the Gul'ko support of $y$) and a real number $a_y$ such that:
\begin{enumerate}
 \item[(i)] $a_y=\sup\{|\varphi(f)(y)-\varphi(g)(y)|:f,g\in C_p(X)\;\text{and}\;\forall x\in K(y) \;|f(x)-g(x)|<1\}$
 \item[(ii)] If $A\subsetneq K(y)$ then for any $r>0$ there are $f,g\in C_p(X)$ such that $\forall x\in A\; |f(x)-g(x)|<1$ and $|\varphi(f)(y)-\varphi(g)(y)|>r$.
\end{enumerate}

In other words, properties (i) and (ii) say that the set $K(y)$ is the minimal set (with respect to inclusion) such that
$$\sup\{|\varphi(f)(y)-\varphi(g)(y)|\colon f,g\in C_p(X)\;\text{and}\; \forall x\in K(y) \;|f(x)-g(x)|<1\}<\infty.$$
Since $\varphi$ is a surjection, the set $K(y)$ is nonempty for any $y\in Y$.

\begin{lem}\label{lemat}
Let $n\in \N$. If $|f(x)-g(x)|<n$, for each $x\in K(y)$, then $|\varphi(f)(y)-\varphi(g)(y)|\leq n\cdot a_y$
\end{lem}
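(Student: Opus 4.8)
The plan is to reduce the statement for general $n$ to the case $n=1$, which is precisely property (i) in the definition of $a_y$, by inserting a finite chain of intermediate functions that linearly interpolate between $g$ and $f$.

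First I would fix $f,g\in C_p(X)$ with $|f(x)-g(x)|<n$ for every $x\in K(y)$, and define $h_k=g+\frac{k}{n}(f-g)\in C_p(X)$ for $k=0,1,\dots,n$, so that $h_0=g$ and $h_n=f$. For each $k<n$ and each $x\in K(y)$ we then have
$$|h_{k+1}(x)-h_k(x)|=\tfrac{1}{n}\,|f(x)-g(x)|<1.$$
Here it is essential that the hypothesis is the \emph{strict} inequality $|f(x)-g(x)|<n$, since property (i) only controls differences of functions that are strictly within $1$ of each other on $K(y)$; this is the only bookkeeping point that requires a little care.

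Applying property (i) to each consecutive pair $h_k,h_{k+1}$ gives $|\varphi(h_{k+1})(y)-\varphi(h_k)(y)|\leq a_y$ for every $k<n$. Summing over $k$ and using the triangle inequality we obtain
$$|\varphi(f)(y)-\varphi(g)(y)|=\Big|\sum_{k=0}^{n-1}\big(\varphi(h_{k+1})(y)-\varphi(h_k)(y)\big)\Big|\leq\sum_{k=0}^{n-1}|\varphi(h_{k+1})(y)-\varphi(h_k)(y)|\leq n\cdot a_y,$$
which is the desired conclusion. Beyond the strict/non-strict inequality remark above, there is no genuine obstacle: the argument is a straightforward chaining of the case $n=1$.
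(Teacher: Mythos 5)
Your proof is correct and follows essentially the same argument as the paper: both interpolate linearly between $f$ and $g$ with $n$ steps of size less than $1$ on $K(y)$, apply property (i) to each consecutive pair, and sum via the triangle inequality. The only difference is the direction of indexing of the chain, which is immaterial.
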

\begin{proof}
For $k\in\{0,1,\ldots,n\}$, put $f_k=f+\frac{k}{n}(g-f)$. Then  $|f_{k+1}(x)-f_{k}(x)|<1$, for each $x\in K(y)$. By property (i) we have
$|\varphi(f_{k+1})(y)-\varphi(f_{k})(y)|\leq a_y$. This gives
\begin{align*}
|\varphi(f)(y)-\varphi(g)(y)|=
|\varphi(f_0)(y)-\varphi(f_n)(y)|&\leq \\
|\varphi(f_0)(y)-\varphi(f_1)(y)|+\ldots+
|\varphi(f_{n-1})(y)-\varphi(f_n)(y)|&\leq n\cdot a_y
\end{align*}
\end{proof}

Recall that a subset $A$ of a space $X$ is \textit{bounded} if for every $f\in C_p(X)$, the set $f(A)$ is bounded in $\mathbb{R}$.

\begin{fakt}\label{bounded}
 Let $\varphi:C_p(X)\to C_p(Y)$ be a uniformly continuous surjection. If $A\subseteq X$ is bounded, then the set $K_A=\{y\in Y:K(y)\subseteq A\}$
 is bounded too.
\end{fakt}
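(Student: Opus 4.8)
The plan is to show that $K_A$ is bounded by deriving a contradiction from the assumption that it is not. So suppose $f_0\in C_p(Y)$ is such that $f_0(K_A)$ is unbounded in $\mathbb R$; I want to produce a function in $C_p(X)$ whose image under a witnessing test is unbounded on the bounded set $A$, contradicting boundedness of $A$. Since $\varphi$ is a surjection, fix $f\in C_p(X)$ with $\varphi(f)=f_0$; also let $\mathbf 0$ denote the zero function and set $g_0=\varphi(\mathbf 0)\in C_p(Y)$. The key quantity to control is $|\varphi(f)(y)-\varphi(\mathbf 0)(y)| = |f_0(y)-g_0(y)|$ for $y\in K_A$; since $g_0$ is a fixed continuous function, unboundedness of $f_0$ on $K_A$ forces $|f_0(y)-g_0(y)|$ to be unbounded on $K_A$ as well (or, more simply, work with $|\varphi(f)(y)|$ directly once one reduces to comparing $f$ with $\mathbf 0$).

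The heart of the argument is Lemma \ref{lemat} applied contrapositively. For $y\in K_A$ we have $K(y)\subseteq A$, so the values $|f(x)|=|f(x)-\mathbf 0(x)|$ for $x\in K(y)$ are all bounded by $M:=\sup\{|f(x)|:x\in A\}<\infty$, which is finite precisely because $A$ is bounded and $f\in C_p(X)$. Choosing $n\in\N$ with $n>M$, Lemma \ref{lemat} gives $|\varphi(f)(y)-\varphi(\mathbf 0)(y)|\le n\cdot a_y$ for every $y\in K_A$. Thus to finish I need the numbers $a_y$, $y\in K_A$, to be bounded; combined with the (to-be-arranged) boundedness of $n$, this would bound $|f_0(y)-g_0(y)|$ on $K_A$, contradicting our choice of $f_0$. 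The main obstacle is therefore exactly this: controlling $\sup\{a_y:y\in K_A\}$. I expect one must observe that $a_y$ is itself computed from the values of pairs $f,g$ on $K(y)\subseteq A$ — by property (i), $a_y=\sup\{|\varphi(f)(y)-\varphi(g)(y)|\}$ over $f,g$ agreeing to within $1$ on $K(y)$ — and then use uniform continuity of $\varphi$ together with boundedness of $A$ to get a uniform bound. Concretely: by uniform continuity, there is a basic neighborhood $V$ of $\mathbf 0$ in $C_p(X)$, determined by a finite set $F\subseteq X$ and some $\delta>0$, such that $(h-h')\in V$ implies $|\varphi(h)(y)-\varphi(h')(y)|<1$ for all $y$; but this only controls differences of $\varphi$-values at $y$ in terms of a fixed finite $F$, not in terms of the moving supports $K(y)$, so a direct estimate needs care.

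The cleanest route around this is: first prove the statement under the extra hypothesis that $\varphi(\mathbf 0)=\mathbf 0$ (which loses no generality, replacing $\varphi$ by $f\mapsto \varphi(f)-\varphi(\mathbf 0)$, still uniformly continuous with the same supports and the same $a_y$), then argue as follows. Fix any $y\in K_A$ and any $\varepsilon>0$; pick $f,g$ witnessing $a_y$ up to $\varepsilon$, i.e. $|f(x)-g(x)|<1$ on $K(y)$ and $|\varphi(f)(y)-\varphi(g)(y)|>a_y-\varepsilon$. Modify $f,g$ outside $A$ to functions $\tilde f,\tilde g$ supported on a neighborhood of $A$ with $\|\tilde f\|,\|\tilde g\|\le$ some constant depending only on $A$ (using that $A$ is bounded, hence $C_p$-bounded, plus Tietze/Urysohn on a metrizable — or merely Tychonoff — ambient space) while keeping $\tilde f,\tilde g$ equal to $f,g$ on $A\supseteq K(y)$; then $|\varphi(\tilde f)(y)-\varphi(\tilde g)(y)|>a_y-\varepsilon$ still, and now $\tilde f,\tilde g$ range over a fixed bounded subset of $C_p(X)$, so a single application of uniform continuity (chaining along a partition of $[\,\tilde g,\tilde f\,]$ into finitely many steps each lying in $V$, the number of steps bounded uniformly over the bounded set) yields $|\varphi(\tilde f)(y)-\varphi(\tilde g)(y)|\le N$ for a constant $N$ independent of $y$. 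Hence $a_y\le N$ for all $y\in K_A$, and combining with $|\varphi(f)(y)|\le n\cdot a_y\le nN$ we contradict unboundedness of $f_0=\varphi(f)$ on $K_A$. This completes the proof; the delicate point throughout is that supports $K(y)$ vary with $y$, so one must "freeze" the relevant test functions inside $A$ before invoking uniform continuity.
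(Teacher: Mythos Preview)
Your overall strategy is right up to the point where you need to control $\sup\{a_y: y\in K_A\}$, and you correctly identify this as the main obstacle. Unfortunately, your proposed fix does not work, for two independent reasons. First, you cannot arrange $\tilde f,\tilde g$ to lie in a fixed bounded subset of $C_p(X)$ while keeping $\tilde f=f$ and $\tilde g=g$ on $A$: boundedness of $A$ only says that each \emph{individual} continuous function is bounded on $A$, but the witnesses $f,g$ in the definition of $a_y$ may take arbitrarily large values on $A$ (indeed on the finite set $K(y)$ itself); all that is required of them is $|f-g|<1$ on $K(y)$. Second, even if you modify $f,g$ off $K(y)$, there is no reason for $\varphi(\tilde f)(y)$ to equal $\varphi(f)(y)$; property~(i) only guarantees $|\varphi(\tilde f)(y)-\varphi(f)(y)|\le a_y$ when $\tilde f=f$ on $K(y)$, so the near-optimality $|\varphi(\tilde f)(y)-\varphi(\tilde g)(y)|>a_y-\varepsilon$ need not survive the modification. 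In short, there is no a~priori uniform bound on the numbers $a_y$, and your argument does not produce one.

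The paper's proof sidesteps this issue entirely by \emph{absorbing} $a_y$ into the choice of the unbounded function. Rather than starting from an arbitrary $f_0$ unbounded on $K_A$, one first picks a discrete sequence $(y_n)$ in $K_A$ on which some $g\in C_p(Y)$ satisfies $g(y_n)=n$, then composes with a continuous $h:\RR\to\RR$ sending $n\mapsto n\cdot a_{y_n}$, and finally uses surjectivity of $\varphi$ to find $f\in C_p(X)$ with $\varphi(f)=h\circ g$. Now Lemma~\ref{lemat} applied at $y_n$ (comparing $f$ with the zero function, after normalizing so that $\varphi(\underline{0})=\underline{0}$) gives $n\cdot a_{y_n}=|\varphi(f)(y_n)|\le m\cdot a_{y_n}$, where $m$ is any integer with $f(A)\subseteq(-m,m)$. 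This is a contradiction for $n>m$, and no bound on $a_{y_n}$ was ever needed.
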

\begin{proof}
Without loss of generality, we may assume that $\varphi(\underline{0})=\underline{0}$, where $\underline{0}$ is a constant function equal to zero in the respective
space.

Striving for a contradiction, suppose that
there is $g\in C_p(Y)$ such that $g(K_A)$ is not bounded in $\RR$. Without loss of generality, we can assume that there is a discrete set
$\{y_n:n\in \N\}\subseteq K_A$ such that $g(y_n)=n$. Let $h:\RR\to \RR$ be a continuous function satisfying $h(n)=a_{y_n}\cdot n$. Since $\varphi$ is a
surjection, there is $f\in C_p(X)$ with $\varphi(f)=h \circ g$, so $\varphi(f)(y_n)=a_{y_n}\cdot n$. The set $A$ is bounded and thus there is $m\in\N$ such
that $f(A)\subseteq (-m,m)$. Take $n>m$. By Lemma \ref{lemat}, we have
$$n\cdot a_{y_n}=\varphi(f)(y_n)=|\varphi(f)(y_n)-\varphi(\underline{0})(y_n)|\leq a_{y_n}\cdot m< a_{y_n}\cdot n,$$
a contradiction.
\end{proof}

\begin{wn}\label{pseudocompact}
If $\varphi:C_p(X)\to C_p(Y)$ is a uniformly continuous surjection and $X$ is pseudocompact, then $Y$ is pseudocompact.
\end{wn}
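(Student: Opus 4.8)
The statement is an immediate consequence of Fact \ref{bounded}, so the only thing to do is to package it correctly. The key observation is purely topological: a Tychonoff space $Z$ is pseudocompact if and only if $Z$ is a bounded subset of itself, i.e.\ every $f\in C_p(Z)$ has bounded image. (This is standard; one direction is the definition of pseudocompactness, the other is trivial.) So ``$X$ pseudocompact'' is literally the assertion that $A:=X$ is a bounded subset of $X$, and ``$Y$ pseudocompact'' is what we want to deduce.

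First I would set $A=X$ and note that $A$ is bounded in $X$ by the pseudocompactness of $X$. Then I would apply Fact \ref{bounded} to this $A$, obtaining that the set $K_A=\{y\in Y:K(y)\subseteq A\}$ is bounded in $Y$. Finally I would observe that for every $y\in Y$ the Gul'ko support $K(y)$ is by construction a (finite, nonempty) subset of $X=A$, so the condition $K(y)\subseteq A$ is automatically satisfied and hence $K_A=Y$. Therefore $Y$ is a bounded subset of itself, which by the characterization above means $Y$ is pseudocompact.

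\textbf{Main obstacle.} There really is none at this stage: all the work has already been carried out in the construction of Gul'ko supports and in the proof of Fact \ref{bounded}. The only points worth a moment's care are recalling the equivalence between pseudocompactness of a Tychonoff space and its being bounded in itself, and noticing the tautology $K(y)\subseteq X$, which makes $K_A$ collapse to all of $Y$.
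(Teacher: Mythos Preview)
Your argument is correct and is precisely the one the paper intends: the corollary is stated immediately after Fact~\ref{bounded} with no separate proof, so the reader is expected to specialize $A=X$, observe that $K_A=Y$ because each Gul'ko support $K(y)$ lies in $X$, and invoke the equivalence of pseudocompactness with boundedness in itself. There is nothing to add.
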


\begin{remark}\label{remark}
In \cite[Proposition 2]{U}, Uspenskii proved that a space $X$ is pseudocompact if and only if $C_p(X)$, as a uniform structure, is $\sigma$-precompact.
Now, a uniformly continuous surjection
$\varphi:C_p(X)\to C_p(Y)$ can be extended to a uniformly continuous function $\widehat{\varphi}:\RR^X\to \RR^Y$ (see \cite[8.3.10]{E}).
If $X$ is pseudocompact, then there is a $\sigma$-compact set $Z\subseteq \RR^X$ with $C_p(X)\subseteq Z$. The
set $\widehat{\varphi}(Z)$ is a $\sigma$-compact subset of $\RR^Y$ and since $\varphi$ is a surjection, $C_p(Y)\subseteq \widehat{\varphi}(Z)$. This means
that $C_p(Y)$ is $\sigma$-precompact and
hence by Uspenskii's theorem the space $Y$ is pseudocompact.
\end{remark}

\begin{tw}\label{compactness}
Let $X$ be a compact metrizable space. If $\varphi:C_p(X)\to C_p(Y)$ is a uniformly continuous surjection, then $Y$ is compact metrizable.
\end{tw}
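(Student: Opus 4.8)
The plan is to reduce the statement to Corollary~\ref{pseudocompact} (whose content, via Fact~\ref{bounded}, is the only genuinely new ingredient in this section) together with standard facts about network weights of $C_p$-spaces. A uniformly continuous map is in particular continuous, and $\varphi$ is onto, so Corollary~\ref{pseudocompact} immediately gives that $Y$ is pseudocompact. The remaining work is to upgrade ``pseudocompact'' to ``compact metrizable'', and this will follow once we know that $Y$ has a countable network.

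To see that $Y$ has a countable network, I would argue as follows. Since $X$ is compact metrizable it is second countable, hence $C_p(X)$ has a countable network, i.e.\ $nw(C_p(X))=\omega$. Network weight does not increase under continuous surjections, so from $\varphi(C_p(X))=C_p(Y)$ we obtain $nw(C_p(Y))\le\omega$, and by Arhangel'skii's theorem $nw(Y)=nw(C_p(Y))\le\omega$. Thus $Y$ is a Tychonoff space with a countable network.

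It remains to combine the two properties. A space with a countable network is Lindel\"of, and every Lindel\"of space is realcompact; a pseudocompact realcompact space is compact, so $Y$ is compact. Finally, for a compact Hausdorff space the network weight equals the weight, so $Y$ is second countable, and by the Urysohn metrization theorem $Y$ is metrizable. Hence $Y$ is compact metrizable. I do not expect any real obstacle here: after the appeal to Corollary~\ref{pseudocompact} the proof only uses classical $C_p$-theory, and the only care needed is to quote the correct standard results (invariance of $nw$ under continuous maps, $nw(Y)=nw(C_p(Y))$, Lindel\"of $\Rightarrow$ realcompact, realcompact $+$ pseudocompact $\Rightarrow$ compact, and compact $+$ countable network $\Rightarrow$ metrizable). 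The whole substance of the theorem, beyond this routine machinery, is the Gul'ko-support argument already carried out in Fact~\ref{bounded} and Corollary~\ref{pseudocompact}.
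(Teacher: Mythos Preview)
Your proposal is correct and follows essentially the same route as the paper: invoke Corollary~\ref{pseudocompact} for pseudocompactness, use $nw(Y)=nw(C_p(Y))\le nw(C_p(X))=nw(X)=\omega$ to get that $Y$ has a countable network, and then combine the two to obtain compactness and metrizability. The only cosmetic difference is that the paper passes directly from ``Lindel\"of $+$ pseudocompact'' to ``compact'' via a single reference, while you interpolate ``realcompact''; both are standard and equivalent.
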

\begin{proof}
Since $X$ is compact metrizable, it has countable networkweight $nw(X)$.
We have $\omega=nw(X)=nw(C_p(X))=nw(C_p(Y))=nw(Y)$ (cf. \cite[Problem 172]{Tk}) and hence $Y$ is Lindel\"of.
By Corollary
\ref{pseudocompact}
the space $Y$ is also pseudocompact so it is compact \cite[Problem 138]{Tk}. Since it has a countable networkweight, it is metrizable.
\end{proof}

Proposition \ref{bounded} reveals that the Gul'ko support $K(y)$ has a similar feature of ``transferring boundedness''
as the usual support $\supp_\varphi(y)$ considered in the case
of linear mappings (see \cite[Lemma 2.1]{B}). In the case of linear surjections Arhangel'skii proved in \cite{A} a theorem in the ``opposite direction'': \textit{If
$\varphi:C_p(X)\to C_p(Y)$ is a linear continuous surjection and $A\subseteq Y$ is bounded, then the set
$\supp_{\varphi}(A)=\bigcup_{y\in A}\supp_{\varphi}(y)$ is bounded too.}
This theorem has very important consequences on the behavior of topological properties under the $l$-equivalence relation (two spaces $X$ and $Y$ are said to
be \textit{$l$-equivalent} provided $C_p(X)$ and $C_p(Y)$ are linearly homeomorphic). This motivates the following question.
\begin{question}
Suppose that $\varphi:C_p(X)\to C_p(Y)$ is a uniformly continuous surjection and let $A\subseteq Y$ be bounded. Is it true that $K(A)=\bigcup_{y\in A}K(y)$ is
bounded?
\end{question}
Even the affirmative answer to the following particular case of the above question may have interesting consequences.
\begin{question}
Suppose that $\varphi:C_p(X)\to C_p(Y)$ is a uniformly continuous surjection and let $A\subseteq Y$ be a convergent sequence.
Is it true that $K(A)=\bigcup_{y\in A}K(y)$ is
bounded?
\end{question}

\section{Uniformly continuous surjections and dimension}

It is known that if $X$ is compact metrizable strongly countable-dimensional (zero-dimensional) and if there is a linear continuous surjection from
$C_p(X)$ onto $C_p(Y)$, then
$Y$ has to be strongly countable-dimensional (zero-di\-men\-sion\-al), cf. \cite{LLP}, \cite{GartFeng}. It is however unclear, whether the same conclusion can be derived for uniformly
continuous surjections, i.e. the following question is open.

\begin{question}\label{pytanie_przel_wym}
Let $X$ be a compact metrizable strongly countable-dimensional (zero-dimensional) space. Suppose that $Y$ is $u$-dominated by $X$. Is $Y$ necessarily
strongly countable-dimensional (zero-dimensional)?
\end{question}

We will prove that the answer to the above question is affirmative provided a uniformly continuous surjection between $C_p(X)$ and $C_p(Y)$ is $c$-good
for some $c>0$, see Definiton \ref{c-good}. We have the following.

\begin{tw}\label{presrvation_count_dim}
Let $X$ be a compact metrizable space. Suppose that $\varphi: C_p(X)\to C_p(Y)$ is a uniformly continuous surjection
which is $c$-good for some $c>0$. Then $Y$ is compact metrizable. Moreover,
\begin{enumerate}[(a)]
 \item if $X$ is zero-dimensional, then $Y$ is zero-dimensional.
 \item If $X$ is strongly countable-dimensional, then $Y$ is strongly countable-dimensional.
\end{enumerate}
\end{tw}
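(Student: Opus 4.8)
The plan is to run an induction on the $fd$-height (for part (b)) and on dimension (for part (a)), using the Gul'ko support machinery of Section 3 together with the $c$-goodness hypothesis to transfer a \emph{finite-dimensional open cover} of (a piece of) $X$ to a finite-dimensional open set in $Y$. That $Y$ is compact metrizable is already Theorem \ref{compactness}, so only the dimension-theoretic statements need work. First I would analyze, for a fixed $y\in Y$, how $\varphi$ interacts with the finite support $K(y)\subseteq X$: by property (i) of the Gul'ko support and Lemma \ref{lemat}, the value $\varphi(f)(y)$ depends on $f$ only ``up to bounded error'' through $f\upharpoonright K(y)$, and I would combine this with the $c$-good hypothesis to produce, for prescribed values on $K(y)$, a preimage $f$ of small norm. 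The first key step is therefore a local statement: for each $y$ and each neighborhood structure around the finite set $K(y)$ in $X$, one can control preimages well enough that the ``coordinates'' $x\in K(y)$ behave like a finite-dimensional quotient.

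The second key step is to pass from these pointwise supports to an open set. Since $Y$ is compact metrizable and each $K(y)$ is finite, I would partition $Y$ according to $|K(y)|$ and use a Baire-category / countable-network argument (as in \cite{GartFeng}, \cite{B}) to find an open $V\subseteq Y$ on which the supports ``stabilize'', i.e. the multifunction $y\mapsto K(y)$ restricted to $V$ is, after shrinking, lower/upper semicontinuous enough that $\bigcup_{y\in V}K(y)$ lies in a nice subset of $X$. If $X$ is zero-dimensional, $X$ has a base of clopen sets, so the finitely many points of each $K(y)$ can be separated by clopen sets; feeding these clopen partitions of $X$ through $\varphi$ via the $c$-good preimages yields a separation of points in $V$, showing $\ind V = 0$ and hence (covering all of $Y$ by such $V$'s and using that $Y$ is metrizable compact, so dimension is determined locally) $\dim Y = 0$. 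This proves (a).

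For (b) I would induct on $\alpha = fdh(X)$. The base case $\alpha = 1$ is Fact \ref{dominacja_kostki}: $X$ finite-dimensional metrizable compact, $\varphi$ $c$-good, forces $Y$ finite-dimensional — here one runs the same local-to-global argument but now separating $K(y)$'s points by small-dimensional-boundary open sets rather than clopen ones, using that $X$ has a basis whose members have finite-dimensional boundaries. For the inductive step, write $X = I(X^{[\alpha-1]})^{-}\text{-part} \cup \cdots$; more precisely use that $X^{[1]} = X\setminus I(X)$ has $fdh(X^{[1]}) = \alpha - 1 < \alpha$ (or handle limit $\alpha$ via $X^{[\alpha]} = \bigcap_{\beta<\alpha}X^{[\beta]}$). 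The point is to split $Y$ into the part whose supports meet only the ``finite-dimensional open part'' $I(X)$ — which by Fact \ref{bounded}-style reasoning and the local argument lies in a finite-dimensional open subset of $Y$ — and a closed ``remainder'' $Y'$ whose supports are controlled by $X^{[1]}$; then restrict $\varphi$ appropriately (using Corollary \ref{wn_wn_Dug} and Proposition \ref{wn_Dug} to pass to the function spaces $C_p(X^{[1]})$, $C_p(Y')$, preserving $c$-goodness up to a change of constant) and apply the inductive hypothesis to conclude $Y'$ is strongly countable-dimensional. Since $Y = (\text{fin.\ dim.\ open part}) \cup Y'$ and a countable union of such decompositions exhausts $Y$, strong countable-dimensionality of $Y$ follows.

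The main obstacle I anticipate is the second step for the inductive/limit case: extracting from the uniformly continuous $c$-good $\varphi$ a genuine \emph{map between the smaller function spaces} $C_p(X^{[1]}) \to C_p(Y')$ (or the right subspaces $C_p(X,\cdot)$, $C_p(Y',\cdot)$) that is again uniformly continuous and $c'$-good, so that the induction closes — in particular showing that $c$-goodness is the precise extra hypothesis that survives the passage to closed subsets and restrictions, where in the linear-continuous case one used the Closed Graph Theorem and linearity. Making the support-stabilization argument (choosing the open set $V$ and showing $\bigcup_{y\in V}K(y)$ is confined to the intended subset of $X$) genuinely rigorous, rather than the local dimension estimate itself, is where the real care is needed.
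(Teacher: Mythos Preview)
Your proposal has a genuine gap, and the paper takes an entirely different route that avoids it.

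The obstacle you yourself flag --- manufacturing from $\varphi$ a uniformly continuous $c'$-good map $C_p(X^{[1]})\to C_p(Y')$ so that an induction on $fdh$ closes --- is not a technicality but the whole difficulty, and you give no mechanism for producing such a map. The Gul'ko support $K(y)$ of a point $y\in Y'$ may straddle both $I(X)$ and $X^{[1]}$, so your proposed dichotomy ``supports in $I(X)$'' versus ``supports controlled by $X^{[1]}$'' does not partition $Y$, and there is no evident way to restrict $\varphi$ to a surjection onto $C_p(Y')$ factoring through $C_p(X^{[1]})$. Your base case is also unjustified: Fact~\ref{dominacja_kostki} concerns the \emph{existence} of surjections $C_p(\I)\to C_p(\I^n)$, not preservation of finite-dimensionality under $c$-good maps, so it does not give ``$X$ finite-dimensional $\Rightarrow$ $Y$ finite-dimensional''.

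The paper's proof uses no induction on $fdh$ at all. Instead it invokes the refined support machinery of \cite{G}, \cite{MP}: $Y$ is covered by closed sets $M_m(p,q)$, each carrying a \emph{perfect} map $K_p\colon M_m(p,q)\to [X]^q$ into the hyperspace of $q$-element subsets, satisfying a uniform estimate of type~(i). The single place $c$-goodness enters is to show each such $K_p$ is \emph{finite-to-one}: if infinitely many $y_n$ shared the same image $K_p(z)$, take $g_n\in C_p(Y)$ with $g_n(y_n)=2p$, $g_n(y_k)=0$ for $k\ne n$, lift via $c$-goodness to $f_n$ with $\|f_n\|\le 2pc$, and use compactness of $[-2pc,2pc]^X$ to find $f_j,f_k$ close on the finite set $K_p(z)$, contradicting the support estimate. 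Once $K_p$ is a closed finite-to-one map, part~(a) follows because $[X]^q$ is zero-dimensional and a nontrivial connected subset of $M_m(p,q)$ would map to a point; part~(b) follows because $[X]^q$ is strongly countable-dimensional and closed maps with zero-dimensional fibers preserve this property (\cite[5.4.7]{Eng}). Finally $Y=\bigcup_{m,p,q}M_m(p,q)$ is a countable union of closed sets with the desired property. This direct hyperspace argument is what your inductive scheme is missing.
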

\begin{proof}
It follows from Theorem \ref{compactness} that $Y$ is compact metrizable. Let us prove statements (a) and (b).
For a natural number $q$, denote by $[X]^q$ the space of all $q$-element subsets of $X$ endowed with the Vietoris topology. 
It was shown by Gul'ko \cite{G} (cf. the proof of Proposition 3.1 in \cite{MP}) that
there is a countable family $\{M(p,q):p,q\in \mathbb{N}\}$ of $F_\sigma$
subsets of $Y$ that covers $Y$ and for any $p,q\in \mathbb{N}$ there is a function
$$K_p:M(p,q)\to [X]^q,$$
whose restriction to any $M\subseteq M(p,q)$ closed in $Y$, is perfect (cf. \cite[Property (9)]{MP}) with the following property (see \cite{MP}):
\begin{align}\label{K_p}
&\forall y\in M(p,q)\\ &\sup\{|\varphi(f)(y)-\varphi(g)(y)|:f,g\in C_p(X),\;\| (f-g)\upharpoonright K_p(y) \|<1\}\leq p\nonumber
\end{align}
It follows that for any $p,q\in \mathbb{N}$ we have $M(p,q)=\bigcup_{m\in \mathbb{N}} M_m(p,q)$, where $M_m(p,q)\subseteq Y$ are closed and
for each $m\in \mathbb{N}$ the function $K_p\upharpoonright M_m(p,q)$ is perfect.

Let us fix $p,q$ and $m$. 
We claim that
\begin{align}\label{fin-to-one}
\text{for any}\; z\in M_m(p,q)\; \text{the set}\; \{y\in M_m(p,q): K_p(y)=K_p(z)\}\; \text{is finite.}
\end{align}
Suppose that this is not the case, i.e. for some $z\in M_m(p,q)$
the set $A=\{y\in M_m(p,q): K_p(y)=K_p(z)\}$ is infinite. Since $K_p$ is perfect, the set $A$ is infinite compact metrizable and hence it contains a convergent sequence
$(y_n)_{n\in \mathbb{N}}$ of distinct points. For $n\in \mathbb{N}$, let $g_n:Y\to [0,2p]$ be a continuous function such that
\begin{equation}\label{g_n}
g_n(y)=
  \left\{\begin{aligned}
&   2p\quad &&\text{if}\quad y=y_n\\
&0  &&\text{if}\quad y=y_k,\; \text{for }k\neq n
  \end{aligned}
 \right.
\end{equation}
It follows from our assumption that, for each $n\in\mathbb{N}$, there is $f_n\in C_p(X)$ with
$$\varphi(f_n)=g_n \quad \text{and}\quad \| f_n \| \leq c \| g_n \|=2pc.$$
It follows that the set $F=\{f_n:n\in \mathbb{N}\}$ is a subset of the Tychonoff cube $[-2pc,2pc]^X$. Since the latter space is compact, the set $F$
has a complete accumulation point $f\in [-2pc,2pc]^X$.
One can find $f_j,f_k\in F$, $j\neq k$ with $\|(f_j-f)\upharpoonright K_p(z)\|<\frac{1}{2}$ and $\|(f_k-f)\upharpoonright K_p(z)\|<\frac{1}{2}$.
We have $\|(f_j-f_k)\upharpoonright K_p(z)\|<1$
and $K_p(y_j)=K_p(z)$. Thus
$$|g_j(y_j)-g_k(y_j)|=|\varphi(f_j)(y_j)-\varphi(f_k)(y_j)|\leq p,$$
by \eqref{K_p}. On the other hand, we infer from \eqref{g_n} that $|g_j(y_j)-g_k(y_j)|=2p$. The obtained contradiction proves
\eqref{fin-to-one}.

This means that
for any $p, q, m\in\mathbb{N}$, the closed mapping $K_p\upharpoonright M_m(p,q):M_m(p,q)\to [X]^q$ is finite-to-one. In particular it has
zero-dimensional fibers.

To prove part (a) of the theorem, assume that $X$ is zero-dimensional. Then $[X]^q$ is zero-dimensional. It follows that $M_m(p,q)$ is zero-dimensional. Indeed,
otherwise $M_m(p,q)$ would contain a nontrivial connected subset $C$ (see \cite[6.2.9]{E}). Since $[X]^q$ is zero-dimensional the image of $C$ under the mapping
$K_p\upharpoonright M_m(p,q)$ is a point which is not possible because $K_p\upharpoonright M_m(p,q)$ is finite-to-one.
Now, $Y=\bigcup\{M_m(p,q):m,p,q\in\mathbb{N}\}$ is zero-dimensional being a countable union of closed zero-dimensional spaces (see \cite[1.3.1]{Eng}).

To show (b), assume that $X$ is strongly countable-dimensional. Then $[X]^q$ is also strongly countable-dimensional.
Since the mapping $K_p\upharpoonright M_m(p,q)$ has zero-dimensional fibers,
it follows from \cite[5.4.7]{Eng} that $M_m(p,q)$ is strongly countable-dimensional. Since $Y=\bigcup\{M_m(p,q):m,p,q\in\mathbb{N}\}$
and each set $M_m(p,q)$ is closed in $Y$, we conclude that $Y$ is strongly countable-dimensional.
\end{proof}

\section{On spaces $u$-dominated by the unit interval}

 In this section we will prove the main result of the first part of paper (see Definition \ref{def_nierownosci}):

 \begin{tw}\label{charakteryzacja_przestrzeni_dominowanych_przez_odcinek}
 For a space $X$, we have $C_p(X)\bumpineq  C_p(\I)$ if and only if $X$ is compact, metrizable and strongly countable-dimensional.
 \end{tw}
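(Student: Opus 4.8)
The plan is to prove the theorem by establishing the two implications separately, with the forward direction ($C_p(X)\bumpineq C_p(\I)$ implies $X$ compact metrizable strongly countable-dimensional) being relatively quick and the reverse direction ($X$ compact metrizable strongly countable-dimensional implies $C_p(X)\bumpineq C_p(\I)$) being the substantial part.

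For the forward implication, suppose $C_p(X)\bumpineq C_p(\I)$. By Definition \ref{def_nierownosci} this gives, for each $\varepsilon>0$, a uniformly continuous surjection $u_\varepsilon:C_p(\I)\to C_p(X)$ which is $(1+\varepsilon)$-good. In particular $X$ is $u$-dominated by $\I$, and the surjection is $c$-good, so Theorem \ref{presrvation_count_dim} immediately gives that $X$ is compact metrizable and (taking part (b)) strongly countable-dimensional. So this direction is essentially a citation of the previously proved theorem.

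For the reverse implication, I would argue by transfinite induction on the $fd$-height $\alpha=fdh(X)$, which is a countable ordinal by Fact \ref{pochodna_przel_wym}. The base case $fdh(X)=1$ means $X$ is finite-dimensional, and then $C_p(X)\bumpineq C_p(\I)$ is exactly Fact \ref{dominacja_kostki}. For the inductive step, the key geometric input is the structure of the $fd$-derivative: writing $A=X^{[1]}=X\setminus I(X)$, one has that $X\setminus A$ is an open, countable-dimensional (indeed strongly countable-dimensional, being a countable union of finite-dimensional closed-in-itself pieces) subset, and $A$ is a closed subspace with $fdh(A)<\alpha$ by Fact \ref{lem_o_pochodnej} (since $A^{[\beta]}\subseteq X^{[\beta+1]}$, roughly). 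I would cover $X\setminus A$ by countably many finite-dimensional compacta $\{K_n\}$ chosen so that, together with $A$, they reflect the decomposition, and then use Proposition \ref{wn_Dug}/Corollary \ref{wn_wn_Dug} to decompose $C_p(X)$ (or rather $C_p(X,A)$) as an appropriate $c_0$-product of the pieces $C_p(K_n,\partial K_n)$ or similar, each of which satisfies $\bumpineq C_p(\I)$ by Fact \ref{dominacja_kostki}, combine these via Fact \ref{c_0_iloczyny} and Fact \ref{prod_Cp(I)} to get $C_p(X,A)\bumpineq \Pi^*_n C_p(\I)\bumpeq C_p(\I)$, and finally glue with $C_p(A)\bumpineq C_p(\I)$ (the inductive hypothesis) using Corollary \ref{wn_wn_Dug} again, noting that a finite product $C_p(\I)\times C_p(\I)\bumpineq C_p(\I)$ as well. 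The limit-ordinal case should follow the same pattern, exhausting $X$ by a decreasing transfinite sequence of the $fd$-derivatives and handling the successive ``annuli''.

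The main obstacle I anticipate is the bookkeeping in the inductive step: one must choose the closed/open decomposition of $X$ relative to its $fd$-derivative so that (i) the relevant subspaces are simultaneously strongly countable-dimensional of strictly smaller $fd$-height or genuinely finite-dimensional, (ii) the $c_0$-product structure genuinely captures $C_p$ of the union — this requires a shrinking/neighborhood argument so that functions on $X$ restrict to sequences vanishing in norm, which in turn needs the pieces to ``shrink'' appropriately toward the closed remainder $A$ — and (iii) the constants in $\bumpineq$ compose correctly, which is the whole point: since $\bumpineq$ is defined with an $(1+\varepsilon)$ for every $\varepsilon$ rather than a fixed constant, composing along $\omega_1$-many steps causes no blow-up, precisely the phenomenon that obstructs the linear argument of Gartside--Feng. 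Verifying that the $c_0$-product construction (as opposed to a full product) is available here — i.e., that the decomposition can be arranged to be ``$c_0$-like'' — is where most of the real work lies, and it is where Fact \ref{prod_Cp(I)} and Fact \ref{c_0_iloczyny} do the heavy lifting.
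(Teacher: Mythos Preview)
Your forward implication and your overall strategy for the reverse implication are both in line with the paper, but there is a genuine gap in your inductive step for successors. You take $A=X^{[1]}$ and assert that $fdh(A)<\alpha=fdh(X)$ ``since $A^{[\beta]}\subseteq X^{[\beta+1]}$, roughly''. This is false once $\alpha$ is an infinite successor. Indeed, one has $A^{[\beta]}=(X^{[1]})^{[\beta]}=X^{[1+\beta]}$, and for infinite $\beta$ this equals $X^{[\beta]}$, not $X^{[\beta+1]}$. Concretely, if $fdh(X)=\omega+1$ then $A^{[\omega]}=X^{[\omega]}\neq\emptyset$ while $A^{[\omega+1]}=X^{[\omega+1]}=\emptyset$, so $fdh(A)=\omega+1=fdh(X)$ and your induction on $fdh(X)$ stalls.

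The paper circumvents this by proving a stronger \emph{relative} statement, Theorem~\ref{w jedna strone}: for every compact metrizable $K$ and every ordinal $\alpha$, $C_p(K,K^{[\alpha]})\bumpineq C_p(\I)$. The induction is on $\alpha$ (uniformly over all $K$), not on $fdh(X)$. In the successor step $\alpha=\beta+1$ one peels off the \emph{top} layer via
\[
C_p(K,K^{[\beta+1]})\bumpeq C_p(K,K^{[\beta]})\times C_p(K^{[\beta]},K^{[\beta+1]}),
\]
handling the first factor by induction and the second by your ``cover by finite-dimensional compacta'' argument (Lemma~\ref{fakt_o_pierwszej_pochodnej}) applied to $K^{[\beta]}$, for which $(K^{[\beta]})^{[1]}=K^{[\beta+1]}$ trivially. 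Your $c_0$-decomposition idea is exactly Lemma~\ref{lemat_o_rozkladzie}, and the limit case is as you sketch. Your approach is salvageable if, at the successor step $fdh(X)=\beta+1$, you instead set $A=X^{[\beta]}$ (which is finite-dimensional) and note that the annuli in $X\setminus A$ have $fdh\le\beta$ by Fact~\ref{lem_o_pochodnej}; but as written the choice $A=X^{[1]}$ does not give a working induction.
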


The ``only if'' part of this theorem follows easily from Theorem \ref{presrvation_count_dim}. To prove the reverse implication it suffices to use Fact \ref{pochodna_przel_wym} and the following

\begin{tw}\label{w jedna strone}
For every compact metrizable space $K$ and every ordinal $\alpha$ we have $C_p(K,K^{[\alpha]}) \bumpineq  C_p(\I)$.
\end{tw}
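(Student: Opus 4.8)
The plan is to prove Theorem~\ref{w jedna strone} by transfinite induction on the ordinal $\alpha$. The base case $\alpha=0$ is trivial since $K^{[0]}=K$, so $C_p(K,K^{[0]})=C_p(K,K)=\{\underline 0\}\bumpineq C_p(\I)$. For the inductive step at a successor ordinal $\alpha+1$, the idea is to write $K^{[\alpha]}$ as a countable union of closed finite-dimensional pieces: more precisely, since $K^{[\alpha+1]}=K^{[\alpha]}\setminus I(K^{[\alpha]})$ and $I(K^{[\alpha]})$ is open in $K^{[\alpha]}$ and $\sigma$-finite-dimensional (being a countable union of finite-dimensional open sets, by second countability), I would choose closed sets $A_n\subseteq K^{[\alpha]}$ with $A_n\subseteq A_{n+1}$, each $A_n$ finite-dimensional, such that $\bigcup_n A_n$ is dense in $I(K^{[\alpha]})$ and $K^{[\alpha]}=K^{[\alpha+1]}\cup\overline{\bigcup_n A_n}$ in a way that lets me decompose the function space. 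The goal of this step is to reduce $C_p(K,K^{[\alpha+1]})$, up to $\bumpeq$, to a $c_0$-product or product of pieces of the form $C_p(\text{finite-dimensional compactum})$ together with a copy of $C_p(K,K^{[\alpha]})$ (to which the inductive hypothesis applies), and then invoke Facts~\ref{dominacja_kostki}, \ref{prod_Cp(I)}, \ref{c_0_iloczyny} and Proposition~\ref{wn_Dug}/Corollary~\ref{wn_wn_Dug} to conclude $C_p(K,K^{[\alpha+1]})\bumpineq C_p(\I)$.

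For the limit case, when $\alpha$ is a limit ordinal, $K^{[\alpha]}=\bigcap_{\beta<\alpha}K^{[\beta]}$; here one uses a cofinal $\omega$-sequence $\beta_0<\beta_1<\cdots$ of ordinals below $\alpha$ (which exists since $K$ is compact metrizable, so $fdh(K)<\omega_1$ and any ordinal below it has countable cofinality or is a successor — in fact every ordinal $<\omega_1$ has cofinality $\le\omega$). Then $K^{[\alpha]}=\bigcap_n K^{[\beta_n]}$ and one telescopes: $C_p(K,K^{[\alpha]})$ should decompose, via repeated application of Proposition~\ref{wn_Dug} to the chain $K^{[\alpha]}\subseteq\cdots\subseteq K^{[\beta_1]}\subseteq K^{[\beta_0]}\subseteq K$, into a $c_0$-product of the spaces $C_p(K^{[\beta_n]},K^{[\beta_{n+1}]})$ together with $C_p(K,K^{[\beta_0]})$. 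Each factor $\bumpineq C_p(\I)$ by the inductive hypothesis applied inside the compactum $K^{[\beta_n]}$ (using Fact~\ref{lem_o_pochodnej} to identify $(K^{[\beta_n]})^{[\gamma]}$ with the relevant derivative), and then Fact~\ref{c_0_iloczyny} together with $C_p(\I)\bumpeq\Pi^*_{i\in\N}C_p(\I)$ (Fact~\ref{prod_Cp(I)}) finishes the limit step.

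The main obstacle I anticipate is making the $c_0$-product decomposition rigorous and \emph{uniform in the choice of the exhausting sequences}: Proposition~\ref{wn_Dug} gives a finite decomposition $C_p(X,B)\bumpeq C_p(A,B)\times C_p(X,A)$, but to get a genuine $c_0$-product (so that the norm estimates behave and $\Pi^*$ rather than the full product $\Pi$ appears) one must check that the relevant ``tail'' functions have norms tending to zero — this requires a careful argument that a continuous function on $K$ vanishing on $K^{[\alpha]}$ is, up to small uniform error, supported on finitely many of the closed pieces $A_n$, which in turn uses compactness of $K$ and continuity. A secondary technical point is verifying that the pieces produced really are finite-dimensional compacta (so Fact~\ref{dominacja_kostki} applies) — this is where $fd$-height and the structure of $I(\cdot)$ enter — and that at limit stages the countable cofinality is exploited correctly. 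Once the $c_0$-product structure is established, the rest is bookkeeping with the Facts from Section~2.
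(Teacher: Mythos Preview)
Your transfinite-induction skeleton matches the paper's, and your identification of the main obstacle --- turning a nested chain into a genuine $\Pi^*$-decomposition --- is exactly right. But the way you propose to cross that obstacle has a gap, and the paper resolves it differently.

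In the limit step you want to telescope through the closed chain $K\supseteq K^{[\beta_0]}\supseteq K^{[\beta_1]}\supseteq\cdots$ by repeated use of Proposition~\ref{wn_Dug}. The problem is that each application of Proposition~\ref{wn_Dug} produces one $\bumpeq$, and there is no general principle letting you compose \emph{infinitely many} such relations into a single $\bumpineq$ onto a $\Pi^*$-product; the $(1+\varepsilon)$-constants and the uniform continuity do not obviously survive the limit. The paper avoids iteration altogether: it passes to a decreasing sequence of \emph{open} sets $U_i$ with $\cl U_{i+1}\subset U_i$, $U_i\supset K^{[\beta_i]}$, $\bigcap_i U_i=K^{[\alpha]}$, and proves a one-shot decomposition (Lemma~\ref{lemat_o_rozkladzie})
\[
C_p(K,K^{[\alpha]}) \;\bumpineq\; \Pi^*_{i\in\N} C_p(\cl U_i\setminus U_{i+1}) \times \Pi^*_{i\in\N} C_p(\bd U_i).
\]
The gain from open sets is that the annuli $\cl U_i\setminus U_{i+1}$ overlap only along the boundaries $\bd U_i$, so after one application of Proposition~\ref{wn_Dug} the identification with the $c_0$-product is literal (the $\equiv$ in the proof), not an infinite composite of $\bumpeq$'s. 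Fact~\ref{lem_o_pochodnej} then gives each annulus and each boundary $fd$-height $<\alpha$, and the inductive hypothesis together with Facts~\ref{c_0_iloczyny} and~\ref{prod_Cp(I)} finishes.

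Your successor step is also more elaborate than needed. The paper applies Proposition~\ref{wn_Dug} once,
\[
C_p(K,K^{[\beta+1]}) \;\bumpeq\; C_p(K,K^{[\beta]}) \times C_p\bigl(K^{[\beta]},(K^{[\beta]})^{[1]}\bigr),
\]
uses the inductive hypothesis on the first factor, and invokes the case $\alpha=1$ (isolated as Lemma~\ref{fakt_o_pierwszej_pochodnej}, itself proved via Lemma~\ref{lemat_o_rozkladzie}) on the second. No exhaustion of $I(K^{[\beta]})$ by an increasing sequence of closed sets is required; the open-set lemma already delivers the finite-dimensional compact pieces.
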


To prove this theorem we need the following :
\begin{lem}\label{lemat_o_rozkladzie}
Let $\{U_i\}_{i \in \N}$ be a collection of open subsets of $K$ such that $\forall i \in \N \; \cl U_{i+1} \subset U_i$ and $U_0=K$. Then
$$ C_p(K,K_0) \bumpineq
\Pi^{*}_{i \in \N } C_p(\cl U_i \setminus U_{i + 1}) \times \Pi^*_{i \in
\N } C_p(\bd U_i)$$
where $K_0=\bigcap U_i$.
\end{lem}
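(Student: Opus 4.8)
The plan is to decompose the function space $C_p(K,K_0)$ using the nested sequence $\{U_i\}$, pairing up the ``annular'' pieces $\cl U_i \setminus U_{i+1}$ with their boundary spheres $\bd U_i$, and then apply Proposition~\ref{wn_Dug} together with Fact~\ref{c_0_iloczyny}. First I would observe that since $\cl U_{i+1}\subset U_i$, the sets $A_i=\cl U_i$ form a decreasing sequence of closed subsets of $K$ with $A_0=K$ and $\bigcap_i A_i=\bigcap_i U_i=K_0$. The natural building block is the relative space $C_p(A_i, A_{i+1})$. By Proposition~\ref{wn_Dug} applied to the triple $K_0\subset A_{i+1}\subset A_i$ (for the tail) and telescoping, one gets a $\bumpeq$-decomposition of $C_p(K,K_0)$ into the $c_0$-product $\Pi^*_{i\in\N} C_p(A_i, A_{i+1})$; here the passage from the finite products to the $c_0$-product requires an argument that the norm of the ``tail'' function $f\restriction A_i$ tends to $0$, which holds because $\bigcap A_i = K_0$ and $f$ vanishes on $K_0$ together with continuity and compactness (uniform continuity of $f$ on the compact metric space $K$ forces $\sup|f\restriction A_i|\to 0$). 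This is essentially the content one extracts from the proof of Fact~\ref{prod_Cp(I)} in \cite{Gor}, and I would cite that proof rather than redo it.

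Next I would analyze each factor $C_p(A_i,A_{i+1}) = C_p(\cl U_i, \cl U_{i+1})$. The set $\cl U_i$ splits along the closed set $\bd U_i \cup \cl U_{i+1}$: write $\cl U_i = (\cl U_i \setminus U_{i+1}) \cup \cl U_{i+1}$, where these two closed pieces overlap exactly in $\bd U_{i+1}$. Actually the cleaner route is to use Proposition~\ref{wn_Dug} with the triple $\cl U_{i+1}\subset \cl U_i\setminus U_{i+1}\cup\cl U_{i+1}$... let me instead argue directly: set $B = \cl U_i\setminus U_{i+1}$, a closed subset of $\cl U_i$, and note $\cl U_{i+1}\subset \cl U_i$ is closed with $B\cup\cl U_{i+1} = \cl U_i$ and $B\cap \cl U_{i+1}=\bd U_{i+1}$. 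A Dugundji-extension argument (the same one underlying Proposition~\ref{wn_Dug}) then yields $C_p(\cl U_i,\cl U_{i+1}) \bumpeq C_p(B,\bd U_{i+1})$, since a function on $\cl U_i$ vanishing on $\cl U_{i+1}$ is determined by its restriction to $B$, which vanishes on $\bd U_{i+1}$, and conversely such restrictions extend. Then another application of Corollary~\ref{wn_wn_Dug}-style reasoning (Proposition~\ref{wn_Dug} with $\emptyset\subset \bd U_{i+1}\subset B$) gives $C_p(B,\bd U_{i+1}) \bumpeq C_p(\bd U_{i+1})\times C_p(B) \cdot$ -- wait, that goes the wrong way; rather $C_p(B) \bumpeq C_p(B,\bd U_{i+1})\times C_p(\bd U_{i+1})$. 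Combining, $C_p(\cl U_i\setminus U_{i+1})\bumpeq C_p(\cl U_i\setminus U_{i+1}, \bd U_{i+1})\times C_p(\bd U_{i+1})$, and one re-indexes the boundary factors. Assembling all pieces via Fact~\ref{iloczyny} and Fact~\ref{c_0_iloczyny}, and using that the $c_0$-product of a reindexed family is unchanged, produces the claimed $\bumpeq$, hence $\bumpineq$, to $\Pi^*_{i}C_p(\cl U_i\setminus U_{i+1})\times\Pi^*_i C_p(\bd U_i)$.

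Finally, to land on $\bumpineq C_p(\I)$ rather than $\bumpeq$ one does not actually need it for the \emph{statement} of the lemma as written (it asserts $\bumpineq$ to a $c_0$-product, not to $C_p(\I)$), so it suffices to verify that each $C_p(\cl U_i\setminus U_{i+1})$ and $C_p(\bd U_i)$ is a $\bumpeq$-factor as above and then quote Fact~\ref{nierownosci}. The main obstacle I anticipate is the $c_0$-convergence bookkeeping: one must check carefully that the extended norms on the telescoped factors behave well, i.e. that for $f\in C_p(K,K_0)$ the sequence of norms of the components assigned to $C_p(A_i,A_{i+1})$ genuinely tends to $0$ — this is where compactness and metrizability of $K$ and the condition $\bigcap U_i = K_0$ are essential, and it is the step most likely to require care to state precisely (controlling the uniform homeomorphisms $u_\varepsilon$ simultaneously across infinitely many factors, as in the proof of Fact~\ref{c_0_iloczyny}). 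The Dugundji/extension steps, by contrast, are routine given Proposition~\ref{wn_Dug}, and the reindexing is purely formal.
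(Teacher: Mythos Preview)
Your route is different from the paper's, and it almost works, but the final assembly is garbled in a way that matters.

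The paper does \emph{not} telescope. It applies Proposition~\ref{wn_Dug} once, with the intermediate closed set $A=K_0\cup\bigcup_{i}\bd U_i$, to get
\[
C_p(K,K_0)\ \bumpeq\ C_p\bigl(K,A\bigr)\times C_p\bigl(A,K_0\bigr),
\]
and then observes that these two pieces are \emph{literally isometric} (an ``$\equiv$'', no Dugundji needed) to $\Pi^*_i C_p(\cl U_i\setminus U_{i+1},\bd U_i\cup\bd U_{i+1})$ and $\Pi^*_i C_p(\bd U_i)$ respectively, because the annuli $U_i\setminus\cl U_{i+1}$ (resp.\ the shells $\bd U_i$) are pairwise disjoint and accumulate only on $K_0$. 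The single $\bumpineq$ then comes from Corollary~\ref{wn_wn_Dug} applied factorwise. So the paper sidesteps entirely the infinite-iteration issue you flag as the ``main obstacle''.

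Your telescoping claim $C_p(K,K_0)\bumpeq\Pi^*_i C_p(\cl U_i,\cl U_{i+1})$ is true, but it is not a one-line citation of Proposition~\ref{wn_Dug} or of the proof of Fact~\ref{prod_Cp(I)}; it needs an actual argument (e.g.\ exactly the one-shot Dugundji with $A$ as above, followed by regrouping). More importantly, your last paragraph draws the wrong conclusion. From
\[
C_p(\cl U_i\setminus U_{i+1})\ \bumpeq\ C_p(\cl U_i\setminus U_{i+1},\bd U_{i+1})\times C_p(\bd U_{i+1})
\]
and $C_p(\cl U_i,\cl U_{i+1})\equiv C_p(\cl U_i\setminus U_{i+1},\bd U_{i+1})$, taking $c_0$-products and reindexing yields
\[
\Pi^*_i C_p(\cl U_i\setminus U_{i+1})\ \bumpeq\ C_p(K,K_0)\times \Pi^*_i C_p(\bd U_i),
\]
i.e.\ the boundary factor lands on the \emph{same} side as $C_p(K,K_0)$, not the opposite side. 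So you do \emph{not} get the ``claimed $\bumpeq$'' to $\Pi^*_i C_p(\cl U_i\setminus U_{i+1})\times\Pi^*_i C_p(\bd U_i)$. The lemma's $\bumpineq$ still follows (project away $\Pi^*_i C_p(\bd U_i)$ to get $C_p(K,K_0)\bumpineq\Pi^*_i C_p(\cl U_i\setminus U_{i+1})$, then add the boundary factor back as a dummy on the right), but via two projections, not via ``$\bumpeq$ hence $\bumpineq$''. In fact your argument proves the slightly sharper statement $C_p(K,K_0)\bumpineq\Pi^*_i C_p(\cl U_i\setminus U_{i+1})$; the $\Pi^*_i C_p(\bd U_i)$ factor in the lemma is, from your point of view, redundant.
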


\begin{proof}
Let us observe that $K= K_0\cup\bigcup_{i \in \N} \cl U_i \setminus U_{i + 1}$. Using Fact \ref{c_0_iloczyny}, Proposition \ref{wn_Dug}, and Corollary \ref{wn_wn_Dug} we obtain:

\begin{align*}
&C_p(K,K_0) \bumpeq\\&\bumpeq C_p(K_0\cup\bigcup_{i \in \N} \cl U_i \setminus U_{i + 1},K_0 \cup \bigcup_{i \in \N} \bd U_i) \times C_p(K_0\cup\bigcup_{i \in \N} \bd U_i, K_0)\\
&\equiv \Pi^{*}_{i \in \N } C_p(\cl U_i \setminus U_{i + 1}
, \bd U_{i+1} \cup\bd U_i) \times \Pi^*_{i \in
\N } C_p(\bd U_i)\\ &\bumpineq \Pi^*_{i \in
\N } C_p(\cl U_i \setminus U_{i + 1}) \times \Pi^*_{i \in
\N } C_p(\bd U_i)\,.\hspace{32mm} \qedhere
\end{align*}
\end{proof}

Another important lemma that will be used in the proof is a particular case of Theorem \ref{w jedna strone}:
\begin{lem}\label{fakt_o_pierwszej_pochodnej}
For every compact metrizable space $K$, $C_p(K,K^{[1]}) \bumpineq  C_p(\I)$.
\end{lem}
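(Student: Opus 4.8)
The plan is to analyze the structure of $K^{[1]} = K \setminus I(K)$, where $I(K)$ is the union of all finite-dimensional open subsets of $K$. Since $K$ is compact metrizable, $I(K)$ is an open, $\sigma$-compact, locally finite-dimensional subset of $K$; by Lindel\"ofness and a standard exhaustion argument I can write $I(K) = \bigcup_{i \in \N} V_i$, where each $V_i$ is open, finite-dimensional, and $\cl V_i \subseteq V_{i+1}$ (we may also arrange $\cl V_i$ compact, which is automatic here). Setting $U_i = K \setminus \cl V_{i-1}$ for $i \geq 1$ and $U_0 = K$ gives a decreasing sequence of open sets with $\cl U_{i+1} \subseteq U_i$ and $\bigcap_i U_i = K \setminus I(K) = K^{[1]}$. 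This is exactly the setup of Lemma \ref{lemat_o_rozkladzie}, which yields
$$C_p(K, K^{[1]}) \bumpineq \Pi^*_{i \in \N} C_p(\cl U_i \setminus U_{i+1}) \times \Pi^*_{i \in \N} C_p(\bd U_i).$$

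Next I would check that each factor on the right is $\bumpineq C_p(\I)$. The sets $\bd U_i = \bd V_{i-1}$ are closed subsets of the finite-dimensional compactum $\cl V_{i-1}$, hence finite-dimensional metrizable compacta, so $C_p(\bd U_i) \bumpineq C_p(\I)$ by Fact \ref{dominacja_kostki}. Similarly $\cl U_i \setminus U_{i+1} = \cl U_i \cap \cl V_i$ is a closed subset of the finite-dimensional compactum $\cl V_i$ (note $U_i$ is open so $\cl U_i$ need not be in $\cl V_i$, but $\cl U_i \setminus U_{i+1} \subseteq \cl(U_i \setminus U_{i+1}) \subseteq \cl(K \setminus U_{i+1}) = \cl V_i$), hence again a finite-dimensional metrizable compactum, so $C_p(\cl U_i \setminus U_{i+1}) \bumpineq C_p(\I)$. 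Applying the $c_0$-product part of Fact \ref{c_0_iloczyny} twice, both $\Pi^*$-products are $\bumpineq \Pi^*_{i \in \N} C_p(\I)$, and by Fact \ref{prod_Cp(I)} the latter is $\bumpeq C_p(\I)$, hence $\bumpineq C_p(\I)$ by Fact \ref{nierownosci}. Finally, $C_p(\I) \times C_p(\I) \bumpineq C_p(\I)$: indeed $C_p(\I^2) \bumpineq C_p(\I)$ by Fact \ref{dominacja_kostki}, and $C_p(\I) \times C_p(\I) \bumpineq C_p(\I^2)$ follows from Corollary \ref{wn_wn_Dug} applied with $\I$ embedded as a closed subset of $\I^2$ (or directly, since $C_p(\I^2) \bumpeq C_p(\I) \times C_p(\I^2, \I)$ and one projects). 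Chaining the $\bumpineq$ relations — which compose, as one checks from Definition \ref{def_nierownosci} since both conditions (i) and (ii) are preserved under composition with a controlled multiplication of the $\varepsilon$'s — gives $C_p(K, K^{[1]}) \bumpineq C_p(\I)$.

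The main obstacle I anticipate is the bookkeeping around the exhaustion $I(K) = \bigcup V_i$: I must make sure the $V_i$ can genuinely be chosen finite-dimensional (not merely locally finite-dimensional) with $\cl V_i \subseteq V_{i+1}$, which uses that $I(K)$ is an open subset of a compact metric space covered by finite-dimensional open sets and that in such spaces one can locally finitely refine and take finite unions of finite-dimensional open sets to stay finite-dimensional (countable sum theorem for $\dim$ on $F_\sigma$ pieces, \cite[1.3.1]{Eng}, combined with local compactness of $I(K)$). The other point requiring a little care is the verification that $\bumpineq$ is transitive with controlled constants, so that finitely (indeed countably, via the $c_0$-product facts) many applications can be composed; this is routine from the definitions but should be stated explicitly. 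Once these are in place, everything else is a direct invocation of Lemma \ref{lemat_o_rozkladzie} and Facts \ref{dominacja_kostki}, \ref{c_0_iloczyny}, \ref{prod_Cp(I)}.
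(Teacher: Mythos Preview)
Your proposal is correct and follows essentially the same route as the paper: apply Lemma~\ref{lemat_o_rozkladzie} with $K_0=K^{[1]}$, observe that the resulting pieces $\cl U_i\setminus U_{i+1}$ and $\bd U_i$ are finite-dimensional compacta, and then use Facts~\ref{dominacja_kostki}, \ref{c_0_iloczyny}, \ref{prod_Cp(I)} to collapse the $c_0$-products to $C_p(\I)$. The paper's version is slightly more economical in two places: it does not build the exhaustion $V_i$ explicitly but simply takes any decreasing $U_i$ with $\bigcap U_i=K^{[1]}$ and notes that $\cl U_i\setminus U_{i+1}$ and $\bd U_i$, being compact subsets of $I(K)$, are automatically finite-dimensional (a compact subset of $I(K)$ is covered by finitely many finite-dimensional open sets); and it absorbs the two $\Pi^*$-products into a single $\Pi^*_{i\in\N}C_p(\I)$ rather than handling $C_p(\I)\times C_p(\I)\bumpineq C_p(\I)$ separately (for which, incidentally, Fact~\ref{prod_Cp(I)} gives the cleaner argument $C_p(\I)\bumpeq C_p(\I)\times C_p(\I)$).
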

\begin{proof}
This lemma is an easy consequence of Lemma \ref{lemat_o_rozkladzie}. Indeed, take $\{U_i\}_{i \in \N}$ a collection of open subsets of $K$ such that $\cl U_i \subset U_{i+1}$ and $K^{[1]}=\bigcap_{n \in \N} U_i$. Then by Lemma \ref{lemat_o_rozkladzie}
$$ C_p(K,K^{[1]}) \bumpineq
\Pi^{*}_{i \in \N } C_p(\cl U_i \setminus U_{i + 1}) \times \Pi^*_{i \in
\N } C_p(\bd U_i).$$
However, it is easy to observe that, for every $i \in \N$, the sets $\cl U_i \setminus U_{i + 1}$ and $\bd U_i$ are finitely dimensional, being compact subsets of $I(K)$, see Definition \ref{def_fd_pochodna}. Hence, by Facts \ref{c_0_iloczyny}, \ref{prod_Cp(I)}, and \ref{dominacja_kostki} we obtain
\begin{displaymath}
C_p(K,K^{[1]}) \bumpineq
\Pi^{*}_{i \in \N } C_p(\I) \bumpeq C_p(\I)\,. \qedhere
\end{displaymath}
\end{proof}

\begin{proof}[Proof of Theorem \ref{w jedna strone}]

The theorem will be proved by transfinite induction. The case $\alpha=0$ is obvious. For $\alpha = 1$ the thesis follows from Lemma \ref{fakt_o_pierwszej_pochodnej}. If $\alpha=\beta+1$ then by Proposition \ref{wn_Dug}
$$C_p(K,K^{[\alpha]}) \bumpeq C_p(K,K^{[\beta]}) \times C_p(K^{[\beta]},K^{[\alpha]}).$$
However, from the inductive assumption we obtain that $C_p(K,K^{[\beta]}) \bumpineq  C_p(\I)$ and by Lemma \ref{fakt_o_pierwszej_pochodnej} we have
$C_p(K^{[\beta]},K^{[\alpha]}) \bumpineq C_p(\I)$. Since $C_p(\I)\bumpeq C_p(\I) \times C_p(\I)$ (from Fact \ref{prod_Cp(I)}) the proof of inequality $C_p(K,K^{[\alpha]}) \bumpineq C_p(\I)$ is finished.

Let us assume that $\alpha$ is a limit ordinal. We can assume that $\alpha < \omega_1$ because $K$ has a countable base and therefore the $fd$-derivative stabilizes after some countable ordinal as it happens in case of standard Cantor-Benedixson derivative.
Let $K^{[\alpha]}=\bigcap_{i \in \N}K^{[\beta_i]}$ where $\beta_i$ is an increasing sequence of ordinals converging to $\alpha$.
Consider  a collection $\{U_i\}_{i \in \N}$ of open subsets of $K$ such that:
\begin{itemize}
\item[(i)] $U_0 = K$;
\item[(ii)] $U_i \supset K^{[\beta_i]}$;
\item[(iii)] $U_i \supset \rm{cl}U_{i+1}$;
\item[(iv)] $\bigcap^{\infty}_{i = 0} U_i = K^{[\alpha]}$.
\end{itemize}
From Lemma \ref{lemat_o_rozkladzie} we obtain:
$$ C_p(K,K^{[\alpha]}) \bumpineq
\Pi^{*}_{i \in \N } C_p(\cl U_i \setminus U_{i + 1}) \times \Pi^*_{i \in
\N } C_p(\bd U_i).$$
Applying Fact \ref{lem_o_pochodnej}: $$(\bd U_i)^{[\beta_{i+1}]} \subset (\cl U_i \setminus U_{i+1})^{[\beta_{i+1}]} \subset K^{[\beta_{i+1}]} \cap (\cl U_i \setminus U_{i+1}) = \emptyset.$$ Hence, by inductive assumption $C_p(\cl U_i \setminus U_{i + 1}) \bumpineq C_p(\I)$ and $C_p(\bd U_i) \bumpineq C_p(\I)$. Facts \ref{c_0_iloczyny}, \ref{prod_Cp(I)}, and \ref{dominacja_kostki} give us the desired conclusion
\[
C_p(K,K^{[\alpha]}) \bumpineq
\Pi^{*}_{i \in \N } C_p(\I) \bumpeq C_p(\I). \qedhere
\]
\end{proof}

From Theorem \ref{charakteryzacja_przestrzeni_dominowanych_przez_odcinek} we immediately obtain

\begin{wn}
Every strongly countable-dimensional compact, metrizable space $K$ is $u$-dominated by the unit interval.
\end{wn}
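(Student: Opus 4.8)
The plan is to read this off directly from the characterization Theorem \ref{charakteryzacja_przestrzeni_dominowanych_przez_odcinek} (equivalently, from its constructive half, Theorem \ref{w jedna strone}), so that essentially all that is needed is to unwind the definitions. First I would recall that, by Fact \ref{pochodna_przel_wym}, a metrizable compactum $K$ is strongly countable-dimensional exactly when $fdh(K)<\omega_1$; hence there is a (countable) ordinal $\alpha$ with $K^{[\alpha]}=\emptyset$, and for this $\alpha$ the space $C_p(K,K^{[\alpha]})$ coincides with $C_p(K)$, since the defining condition ``$f\upharpoonright\emptyset\equiv 0$'' is vacuous.

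Next I would apply Theorem \ref{w jedna strone} with this $\alpha$ to obtain $C_p(K)=C_p(K,K^{[\alpha]})\bumpineq C_p(\I)$. It then remains only to read off Definition \ref{def_nierownosci}: the relation $F\bumpineq E$ provides, for every $\varepsilon>0$, a uniform surjection $u_\varepsilon:E\to F$; taking for instance $\varepsilon=1$ yields a uniformly continuous surjection from $C_p(\I)$ onto $C_p(K)$, which is precisely what it means for $K$ to be $u$-dominated by $\I$ (cf. the definition at the start of Section 2). Alternatively, one may simply quote Theorem \ref{charakteryzacja_przestrzeni_dominowanych_przez_odcinek}, whose ``if'' direction already hands over $C_p(K)\bumpineq C_p(\I)$.

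There is no genuine obstacle here, all the substantial work having been carried out in Sections 2 and 5; the only points to be careful about are the purely bookkeeping identification $C_p(K,\emptyset)=C_p(K)$ and the observation that a $(1+\varepsilon)$-good uniform surjection is in particular a uniformly continuous surjection, so that the extra quantitative content encoded in $\bumpineq$ is simply discarded when passing to $u$-domination.
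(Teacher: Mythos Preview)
Your argument is correct and matches the paper's own derivation: the corollary is stated as an immediate consequence of Theorem \ref{charakteryzacja_przestrzeni_dominowanych_przez_odcinek}, which is exactly what you do (with the small extra service of spelling out the identification $C_p(K,\emptyset)=C_p(K)$ and the passage from $\bumpineq$ to $u$-domination). There is nothing to add.
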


\section{Uniform homeomorphisms between $C_p(X)$ and $C_p(X)\times C_p(X)$}

In this section we will prove the following theorem which partially answers Question 5.8 from \cite{KM}.

\begin{tw}\label{tw_kwadrat}
Let $X$ be an infinite Polish zero-dimensional space. Then we have $C_p(X)\bumpeq C_p(X) \times C_p(X)$, in particular the spaces $C_p(X)$ and $C_p(X) \times C_p(X)$ are uniformly homeomorphic.
\end{tw}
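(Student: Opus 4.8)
The plan is to isolate the combinatorial core of the statement in a \emph{self-squaring propagation} principle and then feed it suitable decompositions of $C_p(X)$. The principle: if $(F_i)_{i\in I}$ are normed linear spaces with $F_i\bumpeq F_i\times F_i$ for every $i$, then $\prod_{i\in I}F_i\bumpeq\bigl(\prod_{i\in I}F_i\bigr)\times\bigl(\prod_{i\in I}F_i\bigr)$, and likewise $\Pi^*_{i\in\N}F_i\bumpeq\bigl(\Pi^*_{i\in\N}F_i\bigr)\times\bigl(\Pi^*_{i\in\N}F_i\bigr)$; moreover $\Pi^*_{i\in\N}F\bumpeq\bigl(\Pi^*_{i\in\N}F\bigr)\times\bigl(\Pi^*_{i\in\N}F\bigr)$ for an \emph{arbitrary} $F$. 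This is immediate from Facts~\ref{iloczyny} and~\ref{c_0_iloczyny} once one notices that $\bigl(\Pi^*_{i\in\N}F_i\bigr)\times\bigl(\Pi^*_{i\in\N}F_i\bigr)$ is, via a regrouping of coordinates, isometrically and homeomorphically $\Pi^*_{i\in\N}(F_i\times F_i)$ (and that a bijective reindexing of $\N\sqcup\N$ onto $\N$ handles the last assertion). Since $C_p(X)\times C_p(X)=C_p(X\oplus X)$, the cases $X\cong X\oplus X$ — in particular $X$ infinite discrete, $X\cong 2^{\N}$, $X\cong\N^{\N}$ — are trivial and will serve as the bases of the induction. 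For all other $X$ the goal is a decomposition $C_p(X)\bumpeq\Pi^*_{i\in\N}C_p(Y_i)$ (a full product when $X$ is not compact) in which each $Y_i$ has strictly smaller rank than $X$, so that $C_p(Y_i)\bumpeq C_p(Y_i)\times C_p(Y_i)$ by induction and the principle applies.

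Such decompositions come from Lemma~\ref{lemat_o_rozkladzie} (whose proof uses only metrizable-level facts, so it applies to any metrizable $X$) applied to a \emph{clopen} telescoping $X=U_0\supseteq U_1\supseteq\cdots$: as the $U_i$ are clopen, every boundary term $C_p(\bd U_i)$ vanishes and the lemma yields $C_p\bigl(X,\bigcap_iU_i\bigr)\bumpeq\Pi^*_{i\in\N}C_p(U_i\setminus U_{i+1})$, after which Corollaries~\ref{wn_wn_Dug} and~\ref{o_punkcie} let us absorb the residual factor $C_p\bigl(\bigcap_iU_i\bigr)$. For $X$ compact metrizable zero-dimensional I would induct on the Cantor--Bendixson rank: if $X$ is perfect it is a Cantor set; otherwise fix a point $x_0$ of top rank and a decreasing clopen neighbourhood base $U_i$ of it with $U_{i+1}\supseteq X^{(\gamma_i)}$ for ordinals $\gamma_i$ increasing to the rank of $X$ and $\bigcap_iU_i=\{x_0\}$, so each shell $U_i\setminus U_{i+1}$ is disjoint from $X^{(\gamma_{i+1})}$ and hence of smaller rank, and $C_p(X)\bumpeq C_p(X,x_0)\bumpeq\Pi^*_{i\in\N}C_p(U_i\setminus U_{i+1})$ by Corollary~\ref{o_punkcie}. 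For $X$ $\sigma$-compact, metrizable, zero-dimensional but not compact — by Fact~\ref{pochodna_c} this means $1<ch(X)<\omega_1$ — I would run the same argument with the $c$-derivative of Definition~\ref{c_pochodna}: choose $\gamma_i\uparrow ch(X)$ and clopen $U_i\downarrow$ with $U_{i+1}\supseteq X_c^{[\gamma_{i+1}]}$ and, using a compact exhaustion of $X$, with $\bigcap_iU_i=\emptyset$; Fact~\ref{lem_o_pochodnej_c} then gives $(U_i\setminus U_{i+1})_c^{[\gamma_{i+1}]}=\emptyset$, so each shell is $\sigma$-compact zero-dimensional of $c$-height $<ch(X)$ (if the height is $1$ it is compact, handled by the previous case), and $C_p(X)=C_p(X,\emptyset)\bumpeq\Pi^*_{i\in\N}C_p(U_i\setminus U_{i+1})$ closes the induction.

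For $X$ Polish zero-dimensional and \emph{not} $\sigma$-compact, a linear homeomorphism $C_p(X)\cong C_p(X)\times C_p(X)$ is already available from Arhangel'skii~\cite{A2} and Baars--de Groot~\cite{BG}; since a linear homeomorphism between $C_p$-spaces is automatically a uniform homeomorphism, this gives the ``in particular'' clause at once, and one either inspects that construction or argues separately — splitting off a closed copy of $\N^{\N}$ by Corollary~\ref{wn_wn_Dug} and exploiting $\N^{\N}\cong\N^{\N}\oplus\N^{\N}$ — to obtain the sharper relation $\bumpeq$. Combining the three cases proves the theorem.

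The step I expect to be most delicate is the bookkeeping of degenerate shells: a shell $U_i\setminus U_{i+1}$ may be empty or a finite discrete space, and $\RR^{n}$ is \emph{not} uniformly homeomorphic to $\RR^{2n}$, so the inductive hypothesis is literally false on such pieces. The resolution is that finite discrete shells contribute only factors $\RR^{n_i}$ inside a $c_0$-product, and $\Pi^*_{i}\RR^{n_i}$ is isometrically (and homeomorphically) a copy of $c_0$ as soon as infinitely many $n_i$ are positive — hence self-squaring — while finitely many such factors can be absorbed into any infinite-dimensional $C_p(Y_j)$ via Corollaries~\ref{wn_wn_Dug} and~\ref{o_punkcie}. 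Verifying that these absorptions always go through, and that the $\gamma_i$ and the clopen sets $U_i$ can be chosen with all the separation and exhaustion properties simultaneously, is where the real work lies; a lesser point is making sure the non-$\sigma$-compact case genuinely yields $\bumpeq$ rather than merely a uniform homeomorphism.
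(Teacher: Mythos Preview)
Your overall architecture---propagate self-squaring through products and $c_0$-products, and feed in clopen telescoping decompositions---is exactly the right one, and matches the paper's. But the two inductions you set up cannot be carried out as stated; the obstruction is the same in both.

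\textbf{Compact case.} You induct on the Cantor--Bendixson rank and, when $X$ is not perfect, choose $x_0$ of ``top rank'' together with a clopen neighbourhood base $(U_i)$ of $x_0$ satisfying $U_{i+1}\supseteq X^{(\gamma_i)}$ and $\bigcap_i U_i=\{x_0\}$. If $X$ has a nonempty perfect kernel $P$ (e.g.\ $X=2^\omega\oplus(\omega+1)$), then $X^{(\gamma)}\supseteq P$ for every $\gamma$, so your conditions force $\bigcap_i U_i\supseteq P$, which is incompatible with $\bigcap_i U_i=\{x_0\}$. Thus your induction only covers the scattered (countable) and the perfect (Cantor) cases, and leaves out all uncountable non-perfect zero-dimensional compacta. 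The paper handles these via the Decomposition Scheme (Proposition~\ref{Canor_wymierne_niewym}(a)), reducing every uncountable zero-dimensional metrizable compactum directly to $2^\omega$; for the countable compacta it quotes Gul'ko's theorem rather than reproving it.

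\textbf{$\sigma$-compact non-compact case.} You require clopen $U_i\downarrow$ with $U_{i+1}\supseteq X_c^{[\gamma_{i+1}]}$, $\gamma_i\uparrow ch(X)$, and $\bigcap_i U_i=\emptyset$. When $ch(X)=\beta+1$ is a successor, the $\gamma_i$ are eventually $\beta$ and $X_c^{[\beta]}\ne\emptyset$, so $\bigcap_i U_i\supseteq X_c^{[\beta]}\ne\emptyset$: the telescope cannot collapse to $\emptyset$. The paper's remedy is precisely to allow $\bigcap_i W_i = X_c^{[\beta]}$, obtaining
\[
C_p(X)\ \bumpeq\ C_p\bigl(X_c^{[\beta]}\bigr)\times \Pi^*_{n}C_p(V_n),
\]
and then to invoke the \emph{compact} case for the residual factor $C_p(X_c^{[\beta]})$ (when $X_c^{[\beta]}$ is compact; a further clopen partition reduces the non-compact locally compact situation to this). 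So the successor step genuinely needs the compact case as an input, not merely as the base $ch=1$; your scheme tries to avoid that dependence and thereby breaks. Once you allow a nonempty core and treat it via Proposition~\ref{Canor_wymierne_niewym}(a) and Gul'ko (i.e.\ Corollary~\ref{square_cpt}), the rest of your outline goes through and coincides with the paper's proof.
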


Let us point out that the assumption on zero-dimensionality of $X$ cannot be dropped in the above theorem, since van Mill, Pelant, and Pol \cite{vMPP}
gave an example of a one-dimensional compact metrizable space $M$ such that $C_p(M)$ is not uniformly homeomorphic to its square. Also, the assumption that
$X$ is completely metrizable cannot be removed, because Krupski and Marciszewski proved in \cite{KM} that
there exists a zero-dimensional subspace $B$ of the real line with $C_p(B)$ not homeomorphic to its square. We do not know if Theorem \ref{tw_kwadrat}
holds true for Borel
subspaces $X$ of the Cantor set.

\begin{question}
Let $X$ be a Borel subspace of the Cantor set. Are the spaces $C_p(X)$ and $C_p(X) \times C_p(X)$ (linearly, uniformly) homeomorphic?
In particular, what if $X$ is $\sigma$-compact?
\end{question}

Recall that a space $X$ is \textit{scattered} if no nonempty subset $A\subseteq X$ is dense-in-itself.

The following results for linear homeomorphisms were proved by Baars and de Groot (part (a) and (b)) \cite{BG} and Arhangel'skii (part (c)) \cite{A2}. Our version requires basically the same arguments, but for the reader convenience we include a short justification.
\begin{prop}\label{Canor_wymierne_niewym}
Let $X$ be a zero-dimensional metrizable space.
\begin{itemize}
\item[(a)] if $X$ is uncountable compact, then $C_p(X)\bumpeq C_p(2^\omega)$,
\item[(b)] if $X$ is countable and not scattered, then $C_p(X)\bumpeq C_p(\mathbb{Q})$,
\item[(c)]
if $X$ is Polish and not $\sigma$-compact, then $C_p(X)\bumpeq C_p(\omega^\omega)$.
\end{itemize}
\end{prop}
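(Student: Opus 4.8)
The statement to prove is Proposition \ref{Canor_wymierne_niewym}, which classifies, up to the relation $\bumpeq$, the spaces $C_p(X)$ for $X$ zero-dimensional metrizable in three canonical cases: $X$ uncountable compact ($C_p(2^\omega)$), $X$ countable non-scattered ($C_p(\mathbb{Q})$), and $X$ Polish non-$\sigma$-compact ($C_p(\omega^\omega)$). The overarching strategy is classical: for linear homeomorphisms these are exactly the results of Baars--de Groot \cite{BG} and Arhangel'skii \cite{A2}, and the proofs there proceed by a Cantor--Bernstein-type argument based on topological embeddings that are complemented (i.e.\ admit retractions), combined with the standard product identities for $C_p$. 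Since the relation $\bumpeq$ is preserved by all the operations we need (products and $c_0$-products, by Facts \ref{iloczyny} and \ref{c_0_iloczyny}; closed-subspace decompositions, by Proposition \ref{wn_Dug} and Corollary \ref{wn_wn_Dug}; removal of a point, by Corollary \ref{o_punkcie}), the same arguments transfer verbatim once one checks that every step used in the linear setting either has a $\bumpeq$-analogue already recorded in Section 2 or follows from the topological structure alone.

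\textbf{Case (a).} First I would fix an uncountable zero-dimensional compact metrizable $X$. By the Cantor--Bendixson theorem write $X = P \cup S$ with $P$ the (nonempty, since $X$ is uncountable) perfect kernel and $S$ the scattered part; $P$ is a nonempty zero-dimensional compact metrizable perfect space, hence homeomorphic to $2^\omega$. The key embeddings are: $2^\omega$ embeds in $X$ as the closed set $P$, so by Corollary \ref{wn_wn_Dug} $C_p(2^\omega) = C_p(P)\bumpineq C_p(X)$; conversely $X$ embeds as a closed (clopen, even) subset of $2^\omega$ since $X$ is zero-dimensional compact metrizable, giving $C_p(X)\bumpineq C_p(2^\omega)$. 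To upgrade these two $\bumpineq$'s to a genuine $\bumpeq$ I would run the Cantor--Bernstein scheme: because $2^\omega \cong 2^\omega \oplus 2^\omega \oplus \cdots$ (countably many clopen copies of itself), Fact \ref{c_0_iloczyny} together with a standard $c_0$-product telescoping identity $C_p(2^\omega)\bumpeq \Pi^*_{i\in\N}C_p(2^\omega)$ (the exact analogue of Fact \ref{prod_Cp(I)}, proved the same way using that $2^\omega$ is a countable clopen sum of copies of itself) lets one absorb complemented summands, so that $A\bumpineq B$ and $B\bumpineq A$ with $B\bumpeq \Pi^*_i B$ yields $A\bumpeq B$. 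Here both the clopen embeddings above are complemented (a clopen subset of a zero-dimensional space is a retract via the characteristic-function-guided retraction, and in $C_p$ terms this gives the splitting $C_p(X)\bumpeq C_p(A)\times C_p(X\setminus A)$ directly from Corollary \ref{wn_wn_Dug} applied to clopen $A$), so the scheme closes.

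\textbf{Cases (b) and (c).} These run in parallel. For (b), $X$ countable, metrizable, zero-dimensional, non-scattered: the non-scatteredness gives a nonempty closed-in-itself subset, hence (being countable metrizable) a closed copy of $\mathbb{Q}$ inside $X$, so $C_p(\mathbb{Q})\bumpineq C_p(X)$; conversely $X$, being countable metrizable with no isolated-point obstruction, embeds as a (closed) subspace of $\mathbb{Q}$ by Sierpi\'nski's theorem, giving $C_p(X)\bumpineq C_p(\mathbb{Q})$. For (c), $X$ Polish, zero-dimensional, non-$\sigma$-compact: $X$ then contains a closed copy of $\omega^\omega$ (Hurewicz's theorem), so $C_p(\omega^\omega)\bumpineq C_p(X)$, and $X$ embeds as a closed subset of $\omega^\omega$ since it is zero-dimensional Polish, giving $C_p(X)\bumpineq C_p(\omega^\omega)$. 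In both cases the Cantor--Bernstein step uses the self-splitting $\mathbb{Q}\cong \mathbb{Q}\oplus\mathbb{Q}\oplus\cdots$ and $\omega^\omega\cong \omega^\omega\oplus\omega^\omega\oplus\cdots$ (countable clopen sums), yielding $C_p(\mathbb{Q})\bumpeq\Pi^*_i C_p(\mathbb{Q})$ and $C_p(\omega^\omega)\bumpeq\Pi^*_i C_p(\omega^\omega)$ exactly as in Fact \ref{prod_Cp(I)}, after which the absorption argument gives the desired $\bumpeq$. The one point where a little care is needed is that the embeddings $X\hookrightarrow \mathbb{Q}$ and $X\hookrightarrow\omega^\omega$ must be shown to be \emph{closed} (so that Corollary \ref{wn_wn_Dug} applies); for (c) this is automatic from Polishness, and for (b) one replaces $X$ by its closure in $\mathbb{Q}$ and notes that a countable metrizable non-scattered space is $\bumpeq$-indistinguishable from $\mathbb{Q}$ by absorbing the scattered (hence by Theorem \ref{w jedna strone}-type reasoning, $\bumpineq C_p(\I)$-dominated — here actually finite/countable-discrete) complementary pieces.

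\textbf{Main obstacle.} The genuinely delicate point is the Cantor--Bernstein bookkeeping: turning the two one-sided relations $C_p(X)\bumpineq C_p(Z)$ and $C_p(Z)\bumpineq C_p(X)$ (for $Z$ the relevant canonical space) into the two-sided $C_p(X)\bumpeq C_p(Z)$. In the linear category Baars--de Groot handle this with the Pe\l czy\'nski decomposition method; the analogue here needs (i) the self-similarity $Z\cong \bigoplus_{n}Z$ to get $C_p(Z)\bumpeq \Pi^*_n C_p(Z)$, and (ii) the fact that each of our embeddings is complemented so that $C_p$ of the larger space splits off $C_p$ of the smaller as a direct product factor — and then a standard ``$A = A\times B\times A\times B\times\cdots$'' telescoping. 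Every ingredient is available from Section 2, but assembling it cleanly — in particular verifying that the $\bumpeq$-splitting of $C_p$ over a clopen partition really does follow from Corollary \ref{wn_wn_Dug} in the non-compact Polish case, where one must be slightly careful about which subsets are closed — is where the actual work lies. This is why the authors, reasonably, only promise a ``short justification'' and lean on \cite{BG} and \cite{A2} for the structural heavy lifting.
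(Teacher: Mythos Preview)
Your overall architecture---mutual closed embeddings plus a Pe\l{}czy\'nski decomposition scheme---is exactly what the paper does, and the chain of $\bumpeq$-identities you would run in (a) matches the paper's computation line for line. But the justification you give for the self-splitting step is wrong in all three cases.

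In (a) you write ``$2^\omega \cong 2^\omega \oplus 2^\omega \oplus \cdots$ (countably many clopen copies of itself)''. This is false: $2^\omega$ is compact, and an infinite disjoint sum of nonempty spaces is not. (Nor is Fact~\ref{prod_Cp(I)} for $C_p(\I)$ proved that way---$\I$ is connected.) The paper obtains $C_p(2^\omega)\bumpeq \Pi^*_{i\in\N}C_p(2^\omega)$ by a different route: the one-point compactification $K$ of $\N\times 2^\omega$ is homeomorphic to $2^\omega$, and then Corollary~\ref{o_punkcie} gives $C_p(2^\omega)\bumpeq C_p(K)\bumpeq C_p(K,\infty)\equiv \Pi^*_{i\in\N}C_p(2^\omega)$. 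The point-at-infinity is what forces the $c_0$-condition on the product side; without it you do not get a $\Pi^*$-identification at all.

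In (b) and (c) the error is dual. Here $\mathbb{Q}$ and $\omega^\omega$ \emph{are} homeomorphic to $\omega$-fold disjoint sums of themselves, but precisely because there is no compactification point, the resulting identification is $C_p(Y)\equiv \Pi_{i\in\N}C_p(Y)$ with the \emph{full} product, not $\Pi^*$. (Indeed $\Pi^*_{i\in\N}C_p(\mathbb{Q})$ excludes any sequence with infinitely many unbounded coordinates, so it is not even all of $C_p(\omega\times\mathbb{Q})$.) The paper explicitly flags this switch: for $Y=\mathbb{Q},\omega^\omega$ one uses countable products in place of $\Pi^*$-products, and the decomposition scheme then runs via Fact~\ref{iloczyny} rather than Fact~\ref{c_0_iloczyny}. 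Once you repair these two product-type issues, your argument coincides with the paper's.
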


\begin{proof} (a)
First, let us observe that $C_p(2^\omega) \bumpeq \Pi^*_{i \in \N } C_p(2^\omega)$. Indeed, from the topological characterization of the Cantor set it  follows  that it is homeomorphic to the one point compactification $K$ of the space $\mathbb{N}\times 2^\omega$. Let $\infty$ denote the point at infinity of this compactification. By Corollary \ref{o_punkcie} we have
$$C_p(2^\omega) \bumpeq C_p(K) \bumpeq C_p(K,\infty) \bumpeq \Pi^*_{i \in \N } C_p(2^\omega).$$
It is well-known that $X$ contains a closed copy $A$ of $2^\omega$ and $2^\omega$ contains a closed copy $B$ of $X$. Therefore we can apply Corollary \ref{wn_wn_Dug} and one of the versions of Decomposition Scheme:
\begin{eqnarray*}
C_p(2^\omega)  &\bumpeq& \Pi^*_{i \in \N } C_p(2^\omega)  \bumpeq \Pi^*_{i \in \N } [C_p(2^\omega,B)\times C_p(B)]\\
&\bumpeq& \Pi^*_{i \in \N } [C_p(2^\omega,B)\times C_p(X,A)\times C_p(A)]\\
&\bumpeq& \Pi^*_{i \in \N } C_p(2^\omega,B)\times \Pi^*_{i \in \N } C_p(X,A) \times \Pi^*_{i \in \N } C_p(A)\\
&\bumpeq& C_p(X,A) \times \Pi^*_{i \in \N } C_p(2^\omega,B)\times \Pi^*_{i \in \N } C_p(X,A) \times \Pi^*_{i \in \N } C_p(A)\\
&\bumpeq& C_p(X,A) \times \Pi^*_{i \in \N } [C_p(2^\omega,B)\times C_p(X,A)\times C_p(A)]\\
&\bumpeq& C_p(X,A)\times \Pi^*_{i \in \N } C_p(2^\omega) \bumpeq C_p(X,A)\times C_p(2^\omega)\\
&\bumpeq& C_p(X,A)\times C_p(A) \bumpeq C_p(X)\,.
\end{eqnarray*}
The proof of parts (b) and (c) are very similar. Here, we can use countable products instead of $ \Pi^*$-products since, for $Y=\mathbb{Q},\omega^\omega$, the space $Y$ is homeomorphic to $\omega\times Y$, hence we have $C_p(Y) \bumpeq (C_p(Y))^\omega$.
\end{proof}

Gul'ko proved in \cite{Gu1} that all infinite countable compact spaces are $u$-equivalent. However, inspecting his proof one can verify that he actually proved the following

\begin{tw}
For every infinite countable compact spaces $X$ and $Y$, we have $C_p(X)\bumpeq C_p(Y)$.
\end{tw}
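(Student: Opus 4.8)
The plan is to fix the convergent sequence $S=\{0\}\cup\{1/n:n\in\N\}$, with limit point $x_0$, and prove that $C_p(X)\bumpeq C_p(S)$ for \emph{every} infinite countable compact space $X$; since $\bumpeq$ is readily checked to be an equivalence relation, the theorem follows at once. I record preliminary remarks. An infinite countable compact space $X$ is metrizable (it has a countable network, and for compact spaces $w(X)=nw(X)$), hence second countable and zero-dimensional, and it is scattered, so has a well-defined Cantor--Bendixson rank; I write $X^{(\alpha)}$ for its $\alpha$-th Cantor--Bendixson derivative and $h(X)$ for the least ordinal $\mu$ with $X^{(\mu)}$ finite, so that $X^{(\mu)}$ is then a nonempty finite set and the rank of $X$ equals $\mu+1$. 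Let $c_0$ denote the space of real null sequences with the supremum norm and the topology of pointwise convergence, so that $c_0=C_p(S,x_0)$; then $c_0\bumpeq C_p(S)$ by Corollary \ref{o_punkcie}, and $C_p(S)\bumpeq C_p(S,x_0)\times C_p(\{x_0\})=c_0\times\RR$ by Corollary \ref{wn_wn_Dug}. I will use freely the (isometric, coordinate-reindexing) identities $c_0\times c_0\cong c_0$, $\Pi^*_{n\in\N}c_0\cong c_0$, $\Pi^*_{n\in\N}\RR=c_0$, and the ``Fubini'' identity $\Pi^*_{n\in\N}(A_n\times B_n)\cong(\Pi^*_{n\in\N}A_n)\times(\Pi^*_{n\in\N}B_n)$; together with the above they give $C_p(S)\times\RR\bumpeq C_p(S)$ and $C_p(S)^k\bumpeq C_p(S)$ for every $k\in\N$.

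The first ingredient is the following lemma: \emph{if $(E_n)_{n\in\N}$ is a sequence of normed linear topological spaces, each of which is $\bumpeq$ either to $C_p(S)$ or to $C_p(D_n)$ for some nonempty finite discrete space $D_n$, then $\Pi^*_{n\in\N}E_n\bumpeq C_p(S)$.} To prove it, note that $C_p(D_n)$ is $\RR^{|D_n|}$ with the supremum norm, so one may write $E_n\bumpeq c_0^{\,a_n}\times\RR^{\,b_n}$ with $a_n\in\{0,1\}$ and $b_n\ge 1$ (namely $a_n=b_n=1$ in the $C_p(S)$ case, via $C_p(S)\bumpeq c_0\times\RR$, and $a_n=0$, $b_n=|D_n|$ otherwise). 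By Fact \ref{c_0_iloczyny} and the Fubini identity,
\[
\Pi^*_{n\in\N}E_n\ \bumpeq\ \Pi^*_{n\in\N}\bigl(c_0^{\,a_n}\times\RR^{\,b_n}\bigr)\ \cong\ \Bigl(\Pi^*_{n\in\N}c_0^{\,a_n}\Bigr)\times\Bigl(\Pi^*_{n\in\N}\RR^{\,b_n}\Bigr).
\]
Since $b_n\ge 1$ for all $n$, the second factor is a $c_0$-sum over the countably infinite index set $\bigsqcup_{n}\{1,\dots,b_n\}$, hence $\cong c_0$; the first factor is $\cong c_0$, or is trivial if $a_n=0$ for every $n$. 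In either case $\Pi^*_{n\in\N}E_n\bumpeq c_0\bumpeq C_p(S)$. The special case $E_n\equiv C_p(S)$ reads $\Pi^*_{n\in\N}C_p(S)\bumpeq C_p(S)$, which is the heart of the matter.

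The second ingredient is a structural decomposition, used to run a transfinite induction on $h(X)$. Assume $C_p(Z)\bumpeq C_p(S)$ has been proved for all infinite countable compact $Z$ with $h(Z)<\mu$, and let $X$ be infinite countable compact with $h(X)=\mu\ge 1$. Since $X$ is zero-dimensional and $X^{(\mu)}=\{z_1,\dots,z_k\}$ is finite, there is a clopen partition $X=U_1\sqcup\dots\sqcup U_k$ with $z_j\in U_j$; then $C_p(X)\cong\Pi_{j\le k}C_p(U_j)$, and each $U_j$ is again infinite countable compact with $U_j^{(\mu)}=U_j\cap X^{(\mu)}=\{z_j\}$ a singleton and $h(U_j)=\mu$. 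Since $C_p(S)^k\bumpeq C_p(S)$, it therefore suffices to treat $X$ with $X^{(\mu)}=\{z\}$ a single point (the argument below for such $X$ uses only the inductive hypothesis for heights $<\mu$, so there is no circularity). In that case $X\setminus\{z\}$ is locally compact, second countable and zero-dimensional, and $X$ is its one-point compactification, since the neighbourhoods of $z$ in $X$ are exactly the complements of compact subsets of $X\setminus\{z\}$ (one separates $z$ from a compact set by a clopen set). Disjointifying a countable cover of $X\setminus\{z\}$ by compact clopen sets, we write $X\setminus\{z\}=\bigsqcup_{i\in\N}W_i$ with each $W_i$ nonempty, compact and clopen in $X$; there are infinitely many of them, as $X\setminus\{z\}$ is not compact ($z$ is not isolated). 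A continuous function on $X$ vanishing at $z$ is precisely a family $(f_i)_{i\in\N}$ with $f_i\in C_p(W_i)$ and $\sup_{W_i}|f_i|\to 0$, i.e.\ an element of $\Pi^*_{i\in\N}C_p(W_i)$; hence by Corollary \ref{wn_wn_Dug},
\[
C_p(X)\ \bumpeq\ C_p(X,z)\times C_p(\{z\})\ \cong\ \Bigl(\Pi^*_{i\in\N}C_p(W_i)\Bigr)\times\RR.
\]
Each $W_i$ is either finite, so that $C_p(W_i)$ is a finite-dimensional space, or infinite with $h(W_i)<\mu$ (because $W_i^{(\mu)}=W_i\cap X^{(\mu)}=\emptyset$), so that $C_p(W_i)\bumpeq C_p(S)$ by the inductive hypothesis. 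The lemma now yields $\Pi^*_{i\in\N}C_p(W_i)\bumpeq C_p(S)$, and therefore $C_p(X)\bumpeq C_p(S)\times\RR\bumpeq C_p(S)$, which closes the induction (when $\mu=1$ the inductive hypothesis is vacuous, since then every $W_i$ is finite).

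The routine points are: that $\bumpeq$ is an equivalence relation compatible with finite products (Fact \ref{iloczyny}); the coordinate-reindexing isometries between the various $c_0$-sums above; the identification of $C_p(X,z)$ with $\Pi^*_{i\in\N}C_p(W_i)$; and the decomposition of a locally compact zero-dimensional Polish space into compact clopen pieces. The step I expect to require the most care is the lemma --- concretely, the identity $\Pi^*_{n\in\N}C_p(S)\bumpeq C_p(S)$. This is precisely the manifestation of Gul'ko's phenomenon that $C_p(\omega^2+1)$ is uniformly homeomorphic to $C_p(\omega+1)$, and it is what allows the transfinite induction to go through despite the fact that passing from $X$ to $X^{(1)}$ need not decrease the Cantor--Bendixson rank of $X$.
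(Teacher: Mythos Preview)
Your argument is correct. The paper does not supply its own proof of this theorem; it simply cites Gul'ko \cite{Gu1} and remarks that an inspection of his argument yields the stronger relation $\bumpeq$. Your proof is essentially a faithful reconstruction of that argument: transfinite induction on the Cantor--Bendixson height, reduction to the case $|X^{(\mu)}|=1$ via a clopen partition, identification of $C_p(X,z)$ with the $c_0$-product $\Pi^*_{i}C_p(W_i)$ over a clopen decomposition of $X\setminus\{z\}$, and the crucial isometric identity $\Pi^*_{n}c_0\cong c_0$ (together with $c_0\bumpeq C_p(S)$ from Corollary~\ref{o_punkcie}) to close the induction. The only organisational difference is that you isolate the absorption step as a separate lemma covering mixed products of copies of $C_p(S)$ and of finite-dimensional $\RR^{b_n}$, which makes the base case $\mu=1$ and the treatment of finite pieces $W_i$ fall out uniformly; Gul'ko handles these in line.
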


From this theorem and Proposition \ref{Canor_wymierne_niewym} immediately follows

\begin{wn}\label{square_cpt}
If $X$ is an infinite zero-dimensional, metrizable compact space, then $C_p(X)\bumpeq C_p(X)\times C_p(X)$.
\end{wn}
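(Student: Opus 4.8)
The plan is to split according to the cardinality of $X$ and, in each case, to reduce the problem to a model space whose square is visibly unchanged. The two inputs are Proposition~\ref{Canor_wymierne_niewym}(a), which handles the case when $X$ is uncountable, and the version of Gul'ko's theorem quoted above, which handles the case when $X$ is infinite and countable. I will use freely that $\bumpeq$ is an equivalence relation (symmetry and transitivity follow by composing, and where necessary rescaling, the maps $u_\varepsilon$) and that it is compatible with finite products by Fact~\ref{iloczyny}. I will also use the elementary observation that, for any space $Z$, the restriction map $f\mapsto (f\upharpoonright Z_1, f\upharpoonright Z_2)$ is a linear homeomorphism and an isometry of $C_p(Z\oplus Z)$ onto $C_p(Z)\times C_p(Z)$: pointwise convergence on the disjoint union is pointwise convergence on each summand, and the sup norm on $C_p(Z\oplus Z)$ equals the product norm of Definition~\ref{def_produktu} (a maximum of two sup norms). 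In particular $C_p(Z\oplus Z)\bumpeq C_p(Z)\times C_p(Z)$, the isometry itself serving as $u_\varepsilon$ for every $\varepsilon>0$.

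First I would treat the case when $X$ is uncountable. By Proposition~\ref{Canor_wymierne_niewym}(a), $C_p(X)\bumpeq C_p(2^\omega)$. Since $2^\omega\oplus 2^\omega$ is a nonempty, compact, metrizable, zero-dimensional space with no isolated points, the topological characterization of the Cantor set gives $2^\omega\oplus 2^\omega\cong 2^\omega$, whence $C_p(2^\omega)\bumpeq C_p(2^\omega\oplus 2^\omega)\bumpeq C_p(2^\omega)\times C_p(2^\omega)$ by the observation above. Combining this with $C_p(X)\bumpeq C_p(2^\omega)$ via transitivity and Fact~\ref{iloczyny} yields
\[
C_p(X)\bumpeq C_p(2^\omega)\bumpeq C_p(2^\omega)\times C_p(2^\omega)\bumpeq C_p(X)\times C_p(X).
\]

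Next I would treat the case when $X$ is countable. Then $X$ and $X\oplus X$ are both infinite countable (metrizable) compacta, so Gul'ko's theorem gives $C_p(X)\bumpeq C_p(X\oplus X)$, and $C_p(X\oplus X)\bumpeq C_p(X)\times C_p(X)$ by the observation above; hence $C_p(X)\bumpeq C_p(X)\times C_p(X)$. Since every infinite zero-dimensional metrizable compactum is either uncountable or countably infinite, these two cases are exhaustive.

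I expect no genuine obstacle here: essentially all the substance lives in Proposition~\ref{Canor_wymierne_niewym} and in Gul'ko's theorem, and the corollary is a bookkeeping exercise on top of them. The only points needing (routine) care are the self-homeomorphism $2^\omega\cong 2^\omega\oplus 2^\omega$, the verification that the restriction isomorphism $C_p(Z\oplus Z)\cong C_p(Z)\times C_p(Z)$ respects the norm conventions of Definition~\ref{def_produktu}, and checking that $X\oplus X$ stays in the relevant class in each case.
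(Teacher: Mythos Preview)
Your proof is correct and follows exactly the route the paper indicates: the corollary is stated as an immediate consequence of Gul'ko's theorem (countable case) and Proposition~\ref{Canor_wymierne_niewym}(a) (uncountable case), and you have simply supplied the straightforward details of that deduction. The only point worth noting is that symmetry of $\bumpeq$ does require the rescaling you mention (post-composing $u_\varepsilon^{-1}$ with scalar multiplication by $(1+\varepsilon)^{-1}$), but this is routine and your remark covers it.
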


\begin{proof}[Proof of Theorem \ref{tw_kwadrat}]\hfill

{\bf Case 1.} If $X$ is not $\sigma$-compact then the desired conclusion follows easily from Proposition \ref{Canor_wymierne_niewym}(c).

{\bf Case 2.} If $X$ is $\sigma$-compact then it has countable $c$-height, see Fact \ref{pochodna_c}. We will prove our assertion by induction on $ch(X)$.

We start with $X$ of $c$-height 1 ($X$ is nonempty), i.e., a locally compact $X$. If $X$ is compact then we can apply Corollary \ref{square_cpt}. In the opposite case, since $X$ is zero-dimensional,  we can cover it by a family $\{U_n: n\in\N\}$ of pairwise disjoint nonempty open and compact subsets. Some of sets $U_n$ can be finite, and we need to consider three cases. If infinitely many $U_n$ are infinite, then without loss of generality we can assume all of them are infinite (if necessary, we can assign, in a one-to-one way, to each finite $U_n$ an infinite $U_{k(n)}$, and replace these two sets by their union). Then by Fact \ref{iloczyny} and Corollary \ref{square_cpt} we have
\begin{eqnarray*}
C_p(X)  &\bumpeq& \Pi_{i \in \N } C_p(U_n) \bumpeq
\Pi_{i \in \N } [C_p(U_n)\times C_p(U_n)]\\
 &\bumpeq& \Pi_{i \in \N } C_p(U_n)\times \Pi_{i \in \N } C_p(U_n) \bumpeq C_p(X)\times C_p(X)\,.
\end{eqnarray*}
If only finitely many $U_n$ are infinite and $X$ is not discrete, then $X$ is homeomorphic to the discrete union of $\N$ and some infinite compact space $Y$. Then again using Fact \ref{iloczyny} and Corollary \ref{square_cpt} we obtain
\begin{eqnarray*}
C_p(X)  &\bumpeq& C_p(\N)\times C_p(Y) \bumpeq \mathbb{R}^\N\times C_p(Y)\\
&\bumpeq& \mathbb{R}^\N\times C_p(Y)\times \mathbb{R}^\N\times C_p(Y)  \bumpeq C_p(X)\times C_p(X)\,.
\end{eqnarray*}
Finally, if $X$ is discrete then $C_p(X)= \mathbb{R}^X$ and our conclusion is trivial.

Assume now that $ch(X)=\alpha>1$ and the hypothesis of the theorem holds true for all spaces $Y$ with $ch(Y)< \alpha$. Let us begin with more complex case when $\alpha=\beta+1$, for some countable ordinal $\beta$. Then the subspace $X_c^{[\beta]}$ is locally compact.

First, we will consider the case when $X_c^{[\beta]}$ is  compact. Fix some admissible metric $d$ on $X$.
Since $X$ is zero-dimensional we can choose a sequence of clopen subsets $W_n$ of $X$, $n\in \mathbb{N}$, such that
\begin{itemize}
\item[(i)] $W_0 = X$,
\item[(ii)] $W_{n+1}\subseteq W_n$ for $n\in \mathbb{N}$,
\item[(iii)] $X_c^{[\beta]}\subseteq W_n\subseteq \{x\in X: d(x,X_c^{[\beta]})<1/n\}$ for $n\ge 1$.
\end{itemize}
By condition (iii) we have $\bigcap_{n\in \mathbb{N}}W_n = X_c^{[\beta]}$. For every $n\in \mathbb{N}$ put
$$V_n = W_n \setminus W_{n+1}\,.$$
The sets $V_n$ are clopen in $X$, hence they are zero-dimensional, $\sigma$-compact and Polish. If, for some $k\in \mathbb{N}$ and all $n\ge k$, the sets $V_n$ were finite, then the set $W_k$ would be compact and open in $X$, hence disjoint from $X_c^{[1]}$, a contradiction with condition (iii) and the inequality $\beta\ge 1$. Therefore, $V_n$ are infinite for infinitely many $n$. Without loss of generality we can assume that this is the case for all $n$ (if necessary, we can glue any finite $V_n$ with the first infinite $V_k$ with a greater index). Since $V_n$ are disjoint from $X_c^{[\beta]}$, we have $ch(V_n)\le\beta$, for $n\in \mathbb{N}$, see Fact \ref{lem_o_pochodnej_c}. Therefore, we can apply our inductive assumption for all $V_n$. By Facts \ref{iloczyny} and \ref{c_0_iloczyny}, and Corollaries \ref{wn_wn_Dug} and \ref{square_cpt} we have
\begin{eqnarray*}
C_p(X)  &\bumpeq& C_p(X_c^{[\beta]})\times C_p(X,X_c^{[\beta]}) \bumpeq C_p(X_c^{[\beta]})\times
\Pi^*_{i \in \N } C_p(V_n)\\
&\bumpeq& C_p(X_c^{[\beta]})\times C_p(X_c^{[\beta]}) \times
\Pi^*_{i \in \N } [C_p(V_n)\times C_p(V_n)]\\
&\bumpeq& C_p(X_c^{[\beta]})\times \Pi^*_{i \in \N } C_p(V_n)\times C_p(X_c^{[\beta]}) \times \Pi^*_{i \in \N } C_p(V_n)\\
&\bumpeq& C_p(X_c^{[\beta]})\times C_p(X,X_c^{[\beta]})\times C_p(X_c^{[\beta]})\times C_p(X,X_c^{[\beta]})\\
&\bumpeq& C_p(X)\times C_p(X)\,.
\end{eqnarray*}

If $X_c^{[\beta]}$ is not compact, since it is
zero-dimensional and locally compact,  we can cover it by a family $\{U_n: n\in \N\}$ of pairwise disjoint nonempty relatively open and compact subsets of $X_c^{[\beta]}$. Since $X_c^{[\beta]}$ is closed in $X$, for every $n$, the union $A_n=\bigcup_{i\ne n} U_i$ is closed in $X$. Let $\mathcal{V}$ be a cover of $X$ consisting of pairwise disjoint clopen sets and inscribed into the open cover $\{X\setminus A_n: n\in\N\}$. Observe that any element of $\mathcal{V}$ can intersect at most one set $U_n$, and, by compactness, any $U_n$ is covered by finitely many elements of $\mathcal{V}$. Therefore we can divide $\mathcal{V}$ into pairwise disjoint finite subfamilies  $\mathcal{V}_n$, $n\in\N$, such that the union $V_n=\bigcup \mathcal{V}_n$ covers $U_n$. Then the family $\{V_n: n\in \N\}$ is a cover
of $X$ consisting of pairwise disjoint clopen sets such that $U_n=V_n\cap X_c^{[\beta]}$, for $n\in\N$.
Clearly, for each $n\in\N$ we have $(V_n)_c^{[\beta]}= U_n$, hence, by previous case, we have $C_p(V_n)\bumpeq C_p(V_n)\times C_p(V_n)$. It remains to observe that we can identify $C_p(X)$ with $\Pi_{i \in \N } C_p(V_n)$ and use Fact \ref{iloczyny}.

Finally, we consider the case when $\alpha=ch(X)$ is a limit ordinal. Let $\{U_n: n\in \N\}$ be a cover of $X$ consisting of pairwise disjoint clopen sets and inscribed into the open cover $\{X\setminus X_c^{[\beta]}: \beta<\alpha\}$. For each $U_n$ we have $ch(U_n)<\alpha$, see Fact \ref{lem_o_pochodnej_c}. Since $ch(X)=\alpha$, infinitely many $U_n$ must be infinite. Using similar argument as in previous cases we can assume that all $U_n$ are infinite.
Therefore, we can identify $C_p(X)$ with $\Pi_{i \in \N } C_p(U_n)$ and use our inductive assumption for each $C_p(U_n)$.
\end{proof}

The next corollary gives an affirmative answer to a part of Question 5.7 from \cite{KM}.

\begin{wn}\label{square_countable}
Let $X$ be an infinite countable metrizable space. Then we have $C_p(X)\bumpeq C_p(X) \times C_p(X)$, in particular the spaces $C_p(X)$ and $C_p(X) \times C_p(X)$ are uniformly homeomorphic.
\end{wn}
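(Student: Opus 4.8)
The plan is to reduce to two cases that have already been settled, using Proposition \ref{Canor_wymierne_niewym} and Theorem \ref{tw_kwadrat}. First I would record that a countable metrizable space $X$ is automatically zero-dimensional: $X$ is regular and countable, so given $x\in X$ and an open $U\ni x$, a Urysohn function separating $x$ from $X\setminus U$ omits some value $r\in(0,1)$ (its range being countable), and the corresponding sublevel set is a clopen neighborhood of $x$ contained in $U$. Hence $X$ falls under the hypotheses of Proposition \ref{Canor_wymierne_niewym}, and I would split the argument according to whether $X$ is scattered.

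If $X$ is scattered, I would invoke the classical fact that a countable metrizable space is completely metrizable if and only if it is scattered. (One implication is an application of the Baire category theorem: a nonempty closed subspace of a countable completely metrizable space has an isolated point; for the other, a countable metrizable space that is not Polish contains a closed copy of $\mathbb{Q}$, for instance by Hurewicz's theorem, and is therefore not scattered.) Thus $X$ is an infinite Polish zero-dimensional space, and Theorem \ref{tw_kwadrat} gives $C_p(X)\bumpeq C_p(X)\times C_p(X)$ immediately.

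If $X$ is not scattered, then Proposition \ref{Canor_wymierne_niewym}(b) yields $C_p(X)\bumpeq C_p(\mathbb{Q})$, so it suffices to prove $C_p(\mathbb{Q})\bumpeq C_p(\mathbb{Q})\times C_p(\mathbb{Q})$. Since $\mathbb{Q}$ is homeomorphic to $\omega\times\mathbb{Q}$, we have $C_p(\mathbb{Q})\bumpeq\Pi_{n\in\N}C_p(\mathbb{Q})$; writing $\N$ as a disjoint union of two infinite sets and applying Fact \ref{iloczyny} (the reindexing bijections being norm-preserving uniform homeomorphisms) gives $\Pi_{n\in\N}C_p(\mathbb{Q})\bumpeq\Pi_{n\in\N}C_p(\mathbb{Q})\times\Pi_{n\in\N}C_p(\mathbb{Q})$, and chaining these relations (legitimate since $\bumpeq$ is transitive) proves the claim. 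The final ``in particular'' clause is then immediate from Definition \ref{def_rownosci}, taking $\varepsilon=1$.

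There is no real obstacle here: apart from the characterization of countable Polish spaces as the scattered ones, the argument is pure bookkeeping with the relation $\bumpeq$. The only point requiring a moment's care is to be sure that every countable metrizable space which is not completely metrizable is covered by Proposition \ref{Canor_wymierne_niewym}(b) — equivalently, that ``not Polish'' forces ``not scattered'' in this setting — which is exactly what the cited characterization provides.
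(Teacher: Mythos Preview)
Your proof is correct and follows essentially the same route as the paper: split into the scattered and non-scattered cases, apply Theorem \ref{tw_kwadrat} in the first (via the equivalence ``countable metrizable scattered $\Leftrightarrow$ Polish'') and Proposition \ref{Canor_wymierne_niewym}(b) in the second. You simply supply more detail than the paper does---the zero-dimensionality argument, the scattered/Polish characterization, and the explicit verification that $C_p(\mathbb{Q})\bumpeq C_p(\mathbb{Q})\times C_p(\mathbb{Q})$ via $\mathbb{Q}\cong\omega\times\mathbb{Q}$ (which the paper records inside the proof of Proposition \ref{Canor_wymierne_niewym}).
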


\begin{proof}
$X$ is either scattered, hence completely metrizable, and we can apply Theorem \ref{tw_kwadrat}, or $X$ is not scattered and Proposition \ref{Canor_wymierne_niewym}(b) applies.
\end{proof}

From Theorem \ref{tw_kwadrat}, the above corollary, and another version of Decomposition Scheme immediately follows

\begin{wn}
Let $X_0,X_1$ be zero-dimensional Polish (countable metrizable) spaces. Then $C_p(X_0)$ is uniformly homeomorphic to $C_p(X_1)$ provided $C_p(X_i)$ is uniformly homeomorphic to $C_p(X_{i-1})\times E_i$, for some topological vector spaces $E_i$, $i=0,1$.
\end{wn}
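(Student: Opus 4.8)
Throughout, write $P\approx Q$ to mean that $P$ and $Q$ are uniformly homeomorphic; recall that $E\bumpeq F$ implies $E\approx F$. The plan is to run the Pe\l czy\'nski decomposition technique, with the ``square'' result Theorem~\ref{tw_kwadrat} (or Corollary~\ref{square_countable} in the countable metrizable case) as the only substantial input.

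First I would dispose of the degenerate case in which one of $X_0,X_1$ is finite. Then the corresponding $C_p(X_i)=\RR^{|X_i|}$ is locally compact; since a factor of a locally compact Hausdorff space is locally compact (a point is closed, so $P\times\{q_0\}$ is a closed, hence locally compact, copy of $P$), and since $C_p(X)$ is locally compact precisely when $X$ is finite, the other space is finite as well, so that $E_0,E_1$ are then finite-dimensional Euclidean spaces too and a comparison of dimensions across the two assumed homeomorphisms gives $C_p(X_0)\approx C_p(X_1)$ at once. So from now on assume $X_0,X_1$ are infinite; put $A=C_p(X_0)$, $B=C_p(X_1)$. By Theorem~\ref{tw_kwadrat} (resp. Corollary~\ref{square_countable}) we have $A\bumpeq A\times A$ and $B\bumpeq B\times B$, hence $A\approx A\times A$ and $B\approx B\times B$; and by hypothesis $A\approx B\times E_0$ and $B\approx A\times E_1$ for some topological vector spaces $E_0,E_1$.

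Next I would prove $A\approx A\times B$. Starting from $A\approx B\times E_0$ and replacing the first coordinate using $B\approx B\times B$ gives $A\approx (B\times B)\times E_0\approx B\times(B\times E_0)$, and since $B\times E_0\approx A$ by hypothesis we conclude $A\approx B\times A\approx A\times B$. Symmetrically, starting from $B\approx A\times E_1$, replacing the first coordinate using $A\approx A\times A$, and using $A\times E_1\approx B$, we obtain $B\approx(A\times A)\times E_1\approx A\times(A\times E_1)\approx A\times B$. Combining the two gives $A\approx A\times B\approx B$, which is exactly the assertion.

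A word on why the $E_i$ may be completely arbitrary topological vector spaces: each step above only composes an already obtained uniform homeomorphism with a canonical re-association or transposition of a finite product, or with a map of the form $u\times\mathrm{id}$, where $u$ is one of the four fixed uniform homeomorphisms $A\to A\times A$, $B\to B\times B$, $B\times E_0\to A$, $A\times E_1\to B$; no algebraic or metric structure on $E_0,E_1$ is invoked. I therefore do not expect a genuine obstacle here: the entire content of the corollary rests on the square result Theorem~\ref{tw_kwadrat} (resp. Corollary~\ref{square_countable}), and the decomposition scheme itself is purely formal.
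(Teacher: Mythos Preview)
Your proposal is correct and follows precisely the route the paper indicates: the paper's ``proof'' is only the sentence that the result follows from Theorem~\ref{tw_kwadrat} (or Corollary~\ref{square_countable}) together with the Pe\l czy\'nski decomposition scheme, and you have spelled this out accurately. Your treatment of the degenerate finite case is a detail the paper omits but which does no harm to add.
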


\end{document}